\documentclass[11pt,leqno]{amsart}

\usepackage{amsmath,amsfonts,latexsym,graphicx,amssymb,url}
\usepackage{stmaryrd}
\usepackage{mathrsfs}

\usepackage{hyperref}

\usepackage{txfonts}
\usepackage{esint}

\usepackage{color}
\usepackage{todonotes}


\renewcommand{\div}{\mbox{div}\,}

\newcommand{\R}{{\mathbb R}} 

\newcommand{\A}{\mathbb{A}}

\newcommand\norm[1]{\left\| #1\right\|}

\newcommand{\I}{{\mathbb I}}

\newcommand{\wei}[1]{\langle #1 \rangle}

\newcommand{\F}{{\mathbf F}}
\newcommand{\G}{{\mathbf G}}

\newtheorem{theorem}{Theorem}[section]
\newtheorem{definition}[theorem]{Definition}
\newtheorem{remark}[theorem]{Remark}
\newtheorem{lemma}[theorem]{Lemma}
\newtheorem{proposition}[theorem]{Proposition}

\numberwithin{equation}{section}

\newcommand{\beq}{\begin{equation}}
\newcommand{\eeq}{\end{equation}}

%
%
%
%

\renewcommand{\epsilon}{\varepsilon}


\title[Weighted gradient estimates, degenerate Elliptic Equations]{Sobolev estimates for singular-degenerate quasilinear equations beyond the $A_2$ class}

\author[H. Dong]{Hongjie Dong}
\address{Division of Applied Mathematics, Brown University, 182 George Street, Providence, RI 02912, USA}
\email{hongjie\_dong@brown.edu}

\author[T. Phan]{Tuoc Phan}
\address{Department of Mathematics, University of Tennessee, Knoxville, 227 Ayres Hall, 1403 Circle Drive, Knoxville, TN 37996, USA}
\email{tphan2@utk.edu}

\author[Y. Sire]{Yannick Sire}
\address{ Department of Mathematics, Johns Hopkins University, 404 Krieger Hall, 3400 N. Charles Street, Baltimore, MD 21218, USA}
\email{ysire1@jhu.edu}

\thanks{H. Dong is partially supported by Simons Fellows Award 007638 and the NSF under agreement DMS-2055244.
Y. Sire is partially supported by the NSF under agreement DMS-2154219.}

\subjclass[2020]{35J70, 35J75, 35J62, 35D30, 35B45}
\keywords{Degenerate and singular quasilinear elliptic equations,  weighted Sobolev estimates, super-degenerate equations}

\begin{document}
\begin{abstract}
We study a conormal boundary value problem for a class of quasilinear elliptic equations in bounded domain $\Omega$ whose coefficients can be  degenerate or singular of the type $\text{dist}(x, \partial \Omega)^\alpha$, where $\partial \Omega$ is the boundary of $\Omega$ and $\alpha \in (-1, \infty)$ is a given number.  We establish weighted Sobolev type estimates for weak solutions  under a smallness assumption on the weighted mean oscillations of the coefficients in small balls. Our approach relies on a perturbative method and several new Lipschitz estimates for weak solutions to a class of singular-degenerate quasilinear equations. \end{abstract}

\maketitle

\section{Introduction and problem setting} \label{Intro-sec}
Let  $\Omega$ be a nonempty open bounded set in $\mathbb{R}^n$ with Lipschitz boundary $\partial \Omega$.  We consider the following class of quasilinear equations with singular-degenerate coefficients and with conormal boundary condition
\begin{equation}  \label{main-eqn}
\left\{
\begin{array}{cccl}
\text{div} [\mu(x) \A(x, \nabla u(x))] & = & \text{div}[\mu(x) \F(x)] & \quad \text{in} \quad \Omega, \\
\mu(x)\A(x, \nabla u(x)) \cdot \vec{\nu}(x) & = & \mu(x) \F(x)\cdot \vec{\nu}(x) & \quad \text{on} \quad \partial \Omega.
\end{array} \right.
\end{equation}
Here, $\vec{\nu} : \partial \Omega \rightarrow \mathbb{R}^n$ is the unit outward vector, $\F:\Omega\rightarrow \mathbb{R}^{n}$ is a given measurable vector field, $\A : \Omega \times \mathbb{R}^n \rightarrow \mathbb{R}^n$ is a given vector field that is measurable in $x \in \Omega$ and Lipschitz in $\xi \in \R^n\setminus\{0\}$, and $\mu: \overline{\Omega} \rightarrow [0, \infty)$ is a  weight function. We assume that $\mu$ is continuous in $\Omega$  and there is a sufficiently small constant $r_0 \in (0, \text{diam}(\Omega))$ such that
\begin{equation} \label{mu.def}
\mu(x) =
\left\{
\begin{array}{cl}
 \text{dist}(x, \partial \Omega)^\alpha & \quad \text{when} \quad \text{dist}(x, \partial \Omega) < r_0\\
 1 & \quad \text{when} \quad \text{dist}(x, \partial \Omega) > 2r_0
 \end{array}
\right.
\end{equation}
with $\alpha \in (-1, \infty)$. We also assume that the vector field $\A : \Omega \times \mathbb{R}^n \rightarrow \mathbb{R}^n$ satisfies the following ellipticity and growth conditions: there exists $\kappa \in (0,1)$ such that
\begin{equation} \label{structure-A}
\left\{
\begin{aligned}
&  \kappa |\xi - \eta|^2 \leq   \wei{\A(x, \xi) -  \A(x, \eta), \xi - \eta}, \quad  \forall \ \xi, \eta \in \mathbb{R}^n, \ \forall \ x \in \Omega, \\
& \A(x, 0) =0, \quad   \quad |\A_{\xi}(x, \xi)| \leq \kappa^{-1}, \quad \
 \quad \forall \ \xi \in \mathbb{R}^n\setminus \{0\}, \quad \forall \ \ x \in \Omega.
\end{aligned} \right.
\end{equation}
Observe that under the assumptions \eqref{mu.def} and \eqref{structure-A}, the  equation \eqref{main-eqn} is singular on $\partial \Omega$ when $\alpha <0$ and degenerate when $\alpha >0$. If $\alpha=0$, \eqref{main-eqn}  reduces to the classical  uniformly elliptic quasilinear equation.

\smallskip
The main purpose of the present paper is to develop a weighted Sobolev theory for weak solutions to the class of singular-degenerate equations \eqref{main-eqn}, in which the weight $\mu$ may not be in the $A_2$ class of Muckenhoupt weights as classically considered in the literature.

\smallskip

Assuming ${\bf F} \in L_{p}(\Omega,\mu)$ for $p \in {[2,} \infty)$ and $\alpha \in (-1, \infty)$, we prove the following weighted estimate of Calder\'{o}n-Zygmund type
\begin{equation} \label{show-off-lp}
\left(\int_{\Omega} |\nabla u(x)|^{p} \mu(x) \,dx \right)^{1/p} \leq N \left( \int_{\Omega}|{\bf F}(x)|^{p} \mu(x) \,dx \right)^{1/p}
\end{equation}
for any weak solution $u$ to \eqref{main-eqn}, where $N>0$ is a positive constant  independent of $u$ and $\bf F$.  See Theorem \ref{g-theorem} below for the precise statement of the result.

\smallskip
To motivate the present investigation, let us discuss briefly a few applications of the study of \eqref{main-eqn}. Quasilinear problems of the form \eqref{main-eqn} appear naturally in the theory of relativistic Euler equations with a physical vacuum condition, as investigated recently in \cite{DIT}. Our system \eqref{main-eqn} corresponds to a stationary version of it. In fact, in \cite{DIT}, the authors considered a similar situation as ours with $\alpha=1$. We believe that our results complement theirs in a very natural way. Also, linear and nonlinear operators of the type considered in \eqref{main-eqn} appear in the study of some special geometric structures known as conic-edge metrics (see e.g. \cite{mazzeo}). The techniques involved in \cite{mazzeo} and many subsequent works in this area are of geometric microlocal nature and completely different from our techniques here. Harmonic maps between conic manifolds  were considered for instance in \cite{jessie} (see also the references therein). The equation under consideration here is an instance of those maps between a manifold with conic-edge metric and a smooth closed manifold. More interestingly, the presence of conormal data is reminiscent of a free boundary version of those as in \cite{moserRoberts,roberts}. Applications to geometric variational problems of this type will be addressed in a subsequent work.

\smallskip
We emphasize that the estimate \eqref{show-off-lp} is new even when \eqref{main-eqn} is linear with $\A(x, \xi) = \xi$ for $(x,\xi) \in \Omega \times \mathbb{R}^n$ as it deals with equations in general domains. Indeed, when the domains are upper-half spaces, more general results on the existence and regularity estimates in weighted and mixed-norm Sobolev spaces for a similar class of linear parabolic equations can be found first in \cite{DP19} with $\alpha \in (-1, 1)$ and then in \cite{DP20} with $\alpha \in (-1, \infty)$.  Similar results for problems with homogeneous Dirichlet boundary conditions can be found in \cite{DP-2023, DP-2021}.  See also a series of papers \cite{vita,vita2,tortone} in which the authors investigated some properties of degenerate-singular linear equations of the form \eqref{main-eqn} in  domains  with flat boundaries under  sufficiently smooth and symmetry assumptions on the leading coefficients. Schauder estimates, Liouville theorems, and geometric properties of the solutions are derived in  these papers.

\smallskip
We also note that when  $\alpha \in (-1,1)$, in the influential paper \cite{CS}  the authors showed that in the linear setting \eqref{main-eqn} is related to the realization as a Dirichlet-to-Neumann map of the fractional Laplacian. In this case, $\mu$ belongs to Muckenhoupt class of $A_2$ weights, and in the framework of non-local fractional elliptic equations, the weighted Sobolev theory was developed in \cite{MP}.  See also  \cite{CMP} for similar results on $W^1_p$-estimates for solutions of linear elliptic equations whose coefficients can be singular or degenerate with general $A_2$-weights instead of the distance function $\mu$ as in \eqref{main-eqn}, but with some restrictive smallness assumption on the weighted mean oscillations of the coefficients that cannot be applied to our setting here. The same class of linear elliptic equations whose coefficients are singular or degenerate as general $A_2$-Muckenhoupt weights were also studied in the classical papers \cite{Fabes, Surnachev, Murthy-Stamp} in which H\"{o}lder regularity of solutions were proved.

\section{Functional spaces, definitions, and statements of main results }\label{Sec-main}
 Let us introduce some notation and definitions used in the paper.  For a given nonnegative weight $\sigma$ on $\Omega$ and for $1 \leq p < \infty$, a measurable function $f$ defined on $\Omega$ is said to be in the weighted Lebesgue space $L_p(\Omega, \sigma)$  if
\[\norm{f}_{L_p(\Omega,\sigma)} =\left( \int_{\Omega} |f(x)|^p \sigma(x) \,dx  \right)^{1/p}< \infty. \]
For $k \in \mathbb{N}$, a function $f \in L_p(\Omega, \sigma)$ is said to belong to the weighted Sobolev space $W^k_p(\Omega, \sigma)$ if all of its distributional derivatives
$D^\beta f$ are in $L_p(\Omega, \sigma)$ for $\beta= (\beta_1, \beta_2, \ldots, \beta_n) \in  (\mathbb{N} \cup \{0\})^n$ and $|\beta| = \beta_1 + \beta_2 +\ldots + \beta_n \leq k$.  The space $W^k_p(\Omega, \sigma)$ is equipped with the norm
\[
\norm{f}_{W^k_p(\Omega, \sigma)} = \left( \sum_{|\beta| \leq k} \norm{D^\beta f}_{L_p(\Omega, \sigma)}^p \right)^{1/p}.
\]

Next, we give the definition of weak solutions to \eqref{main-eqn}.
\begin{definition} Assume that \eqref{structure-A} holds, $\F \in L_p(\Omega, \mu)^n$ with $ 1 < p < \infty$, and $\mu$ satisfies \eqref{mu.def}. A function $u~\in~W^1_p(\Omega, \mu)$ is said to be a weak solution of \eqref{main-eqn} if
\begin{equation}\label{def-soln}
\int_\Omega \mu(x)\wei{\mathbb{A}(x,\nabla u(x)), \nabla \varphi} \,dx =  \int_\Omega \mu(x)\langle {\bf F}(x), \nabla \varphi (x)\rangle \,dx, \quad \forall \varphi\in C^{\infty}(\overline{\Omega}).
\end{equation}
\end{definition} \noindent
\noindent
For each $\rho >0$ and $x \in \mathbb{R}^n$, we denote $B_\rho(x)$ to be the ball in $\mathbb{R}^n$ of radius $\rho$ and centered at $x$. When $x =0$, we simply write $B_\rho = B_\rho(0)$. Also, for each $x \in \overline{\Omega}$ and $\rho >0$, we write
\[
\Omega_\rho(x) = \Omega \cap B_\rho(x).
\]
We give the following definition on bounded mean oscillations with weight $\mu$ for the vector field $\A$.
\begin{definition} \label{BMO-def} For every $x_0 \in \overline{\Omega}$ and $\rho >0$, and for a given measurable vector field $\A :\Omega \times \mathbb{R}^n \rightarrow \mathbb{R}^n$ satisfying \eqref{structure-A}, the mean oscillation of $\A$ in $\Omega_\rho(x_0)$ with respect to the weight $\mu$ is defined by
\begin{equation} \label{Theta.def}
\Theta_{\rho, x_0}(\A, \mu)= \frac{1}{\mu(\Omega_\rho(x_0))}\int_{\Omega_\rho(x_0)} \mu(x) \left(\sup_{\xi \in \mathbb{R}^n \setminus \{0\}} \frac{|\mathbb{A}(x, \xi) -\A_{\Omega_\rho(x_0)}(\xi)|}{|\xi|}\right) \,dx,
\end{equation}
where $\mu(\Omega_\rho(x_0)) = \displaystyle{\int_{\Omega_\rho(x_0)} \mu(x)\, dx}$, and $\A_{\Omega_\rho(x_0)}(\xi) $ is the weighted average of $\A$ in $\Omega_\rho(x_0)$ which is defined by
\begin{equation} \label{A-weight-everage}
\A_{\Omega_\rho(x_0)}(\xi) = \frac{1}{{\mu(\Omega_\rho(x_0))}}\int_{\Omega_\rho(x_0)} \mathbb{A}(x,\xi){\mu(x)} \,dx.
\end{equation}
\end{definition}
Below for each $x' \in \mathbb{R}^{n-1}$ and $\rho>0$, we denote $B'_\rho(x')$ the ball in $\mathbb{R}^{n-1}$ with radius $\rho$ and centered at $x'$.
\begin{definition} \label{Lip-domain} For given positive numbers $\delta$ and $\rho_0$, we say that $\Omega$ is of $(\delta, \rho_0)$-Lipschitz if for any $x_0 = (x'_0, x_n^0) \in \partial \Omega$, there exists a Lipschitz continuous function $\gamma : \mathbb{R}^{n-1} \rightarrow \mathbb{R}$ such that upon relabelling and reorienting the coordinates
\[
\begin{split}
&\Big \{x= (x', x_n) \in B_{\rho_0}'(x_0') \times \mathbb{R}:  \gamma(x') < x_n < \gamma(x') + \rho_0 \Big\} \subset\Omega,   \\
& \Big\{(x', \gamma(x')): x' \in B'_{\rho_0}(x_0') \Big\} \subset \partial \Omega,
\end{split}
\]
and
\[
\gamma(x'_0) = x_n^0,  \quad \nabla \gamma(x'_0) =0, \quad  \|\nabla \gamma\|_{L_\infty(\mathbb{R}^{n-1})} \leq \delta.
\]
\end{definition}
\begin{remark} If $\Omega$ is $(\delta, \rho_0)$-Lipschitz, then it is $(\delta, \rho)$-Lipschitz for any $\rho \in (0,\rho_0)$. If $\partial \Omega \in C^1$, then it is $(\delta, \rho_0)$-Lipschitz for any sufficiently small $\delta>0$ and for $\rho_0 = \rho_0(\Omega, \delta,n)>0$.
\end{remark}
\noindent
The following theorem on gradient estimates of weak solutions to \eqref{main-eqn} is the main result of the paper.
\begin{theorem} \label{g-theorem} Let $\alpha \in (-1, \infty)$,  $\kappa, r_0, \rho_0 \in (0,1)$, and $p \in [2, \infty)$. There exists a sufficiently small number $\delta =\delta(\kappa, n, p, r_0, \rho_0, \alpha)>0$ such that the following assertions hold.  Assume that \eqref{mu.def} and \eqref{structure-A} hold, and $\Omega$ is $(\delta, \rho_0)$-Lipschitz. Assume also that there is $R_0 \in (0,1)$ so that
\begin{equation} \label{Lip-BMO}
\sup_{\rho \in (0, R_0)}\sup_{x\in \overline{\Omega}}\Theta_{\rho, x} (\A, \mu) \leq \delta.
\end{equation}
Then for any weak solution $u \in W^1_2(\Omega, \mu)$ to \eqref{main-eqn}  with some ${\bf F} \in L_p(\Omega, \mu)^n$, we have $u\  \in\ W^{1}_{p}(\Omega, \mu)$, and it satisfies the estimates
\begin{equation} \label{main-est}
\begin{split}
& \norm{\nabla u}_{L_p(\Omega, \mu)} \leq N
\norm{{\bf F}}_{L_p(\Omega, \mu)}  \quad \text{and} \\
& \norm{u}_{L_p(\Omega, \mu)} \leq N\norm{u}_{L_2(\Omega, \mu)} + N\norm{{\bf F}}_{L_p(\Omega, \mu)},
\end{split}
\end{equation}
where $N$ is a positive constant depending on $n, p, \kappa, \alpha, r_0, R_0, \rho_0, \alpha, \Omega$, and $\mu$.
Moreover, for any ${\bf F} \in L_p(\Omega, \mu)^n$, there is a weak solution $u \in W^{1}_{p}(\Omega, \mu)$ to the equation \eqref{main-eqn} and it is unique up to a constant.
 \end{theorem}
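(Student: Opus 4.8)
The plan is to prove the two estimates in \eqref{main-est} by a perturbation argument resting on three ingredients: sharp Lipschitz bounds for a model singular-degenerate equation with $x$-independent leading term, a comparison estimate that converts the smallness in \eqref{Lip-BMO} into a small error, and a level-set iteration carried out with respect to the measure $d\omega = \mu(x)\,dx$ rather than Lebesgue measure. The reason for working with $\omega$ directly is that, although $\mu$ need not be an $A_2$ weight when $\alpha \ge 1$, the measure $\omega$ is doubling on $\Omega$ for every $\alpha > -1$ and every Lipschitz domain; hence the Hardy--Littlewood maximal operator $\M_\omega$ associated with $\omega$ is bounded on $L_q(\Omega,\omega)$ for all $q \in (1,\infty)$, and this is what replaces the Muckenhoupt machinery. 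Throughout, $\delta$ is chosen small only at the very end, which is why it is permitted to depend on $p$.

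The first step is to fix the geometry and produce the reference estimates. Near a boundary point $x_0$ we use the $(\delta,\rho_0)$-Lipschitz chart of Definition \ref{Lip-domain} and flatten $\partial\Omega$ via $y = (x', x_n - \gamma(x'))$; this turns \eqref{main-eqn} on a small half-ball into an equation of the same structure whose leading field has weighted mean oscillation controlled by $\Theta_{\rho,x_0}(\A,\mu)$ plus a term of size $O(\delta)$ coming from $\nabla\gamma$, and whose weight is comparable to the model weight $|y_n|^\alpha$. Interior balls are treated identically but without the flattening. Freezing the leading term to its weighted average $\A_{\Omega_\rho(x_0)}$, the reference problem becomes $\div[\mu_0\,\A_0(\nabla v)] = 0$ with $\mu_0$ the model weight, and the needed input is the Lipschitz bound
\[
\|\nabla v\|_{L_\infty(\Omega_{\rho/2}(x_0))}^2 \le \frac{N}{\omega(\Omega_\rho(x_0))}\int_{\Omega_\rho(x_0)} |\nabla v|^2 \,d\omega,
\]
valid both in the interior and up to the flat boundary, uniformly in $\alpha \in (-1,\infty)$. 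These Lipschitz estimates are the new a priori bounds for the model equation that the rest of the argument consumes.

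Next comes the comparison step. On $\Omega_\rho(x_0)$ let $w$ be the weak solution of the reference problem sharing the conormal data of $u$; testing the difference of the two weak formulations \eqref{def-soln} with $u-w$ and using the strict monotonicity and the bound on $\A_\xi$ in \eqref{structure-A}, together with H\"older's inequality and a Gehring-type self-improvement of $|\nabla u|$ to absorb the gap between $L_\infty$ and $L_2$ in the oscillation term, one obtains
\[
\frac{1}{\omega(\Omega_\rho(x_0))}\int_{\Omega_\rho(x_0)} |\nabla u - \nabla w|^2 \,d\omega \le N\delta^{\beta}\, \frac{1}{\omega(\Omega_\rho(x_0))}\int_{\Omega_\rho(x_0)} \big(|\nabla u|^2 + |\F|^2\big)\,d\omega
\]
for some $\beta \in (0,1)$. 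Combining this with the Lipschitz bound for $w$ gives the standard ``large-$M$'' super-level-set inequality: for suitable constants $A,c$,
\[
\omega\big(\{\M_\omega(|\nabla u|^2) > A\lambda\}\cap B_r\big) \le N\delta^{\beta}\,\omega\big(\{\M_\omega(|\nabla u|^2) > \lambda\}\cap B_r\big) + \omega\big(\{\M_\omega(|\F|^2) > c\lambda\}\big)
\]
whenever the lower-level set does not already exhaust $B_r$. Multiplying by $\lambda^{p/2-1}$, integrating in $\lambda$, and using the boundedness of $\M_\omega$ on $L_{p/2}(\Omega,\omega)$ (the exponent $p/2 \ge 1$ is admissible, the case $p=2$ being just the energy identity), one absorbs the term $N\delta^{\beta}$ by taking $\delta$ small and arrives at $\|\nabla u\|_{L_p(\mu)} \le N\|\F\|_{L_p(\mu)}$; the bound for $\|u\|_{L_p(\mu)}$ then follows from a weighted Poincar\'e inequality on $\Omega$, available because $\omega$ is doubling and $\partial\Omega$ is Lipschitz.

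Two matters remain, the second being the main obstacle. First, the $L_p$ bound above is a priori conditional on $\nabla u \in L_p(\mu)$; this is removed by running the iteration on the truncations $\min\{\M_\omega(|\nabla u|^2),k\}$ and letting $k\to\infty$, or equivalently by using Gehring's lemma to pass from $W^1_2(\Omega,\mu)$ to $W^1_{2+\epsilon_0}(\Omega,\mu)$ and bootstrapping through finitely many exponents up to $p$. Second, and this is where the real work lies, one must prove the boundary Lipschitz estimate for the model equation $\div[\,|y_n|^\alpha\,\A_0(\nabla v)] = 0$ on a half-ball with the conormal condition on $\{y_n = 0\}$, uniformly over the whole range $\alpha \in (-1,\infty)$, including the super-degenerate regime $\alpha \ge 1$ where $\mu$ leaves the $A_2$ class; I expect this to proceed by differentiating the equation in the tangential variables $y'$, deriving weighted energy inequalities with weight $|y_n|^\alpha$ combined with Hardy-type inequalities, running a De Giorgi or Moser iteration adapted to the doubling measure $\omega$ to control the tangential gradient, and then recovering $\partial_{y_n}v$ directly from the equation, the nonlinearity forcing this to be carried out at the level of the gradient rather than of the solution. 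Finally, existence for $\F \in L_p(\Omega,\mu)^n$ with $p>2$ follows by approximating $\F$ by $L_2$ data, solving each problem in $W^1_2(\Omega,\mu)$ via the theory of monotone operators using \eqref{structure-A}, and passing to the limit with the uniform estimate \eqref{main-est}; uniqueness up to a constant follows by testing the difference of two solutions against itself and using strict monotonicity together with the weighted Poincar\'e inequality.
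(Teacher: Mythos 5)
Your architecture matches the paper's almost exactly: flatten the boundary, freeze coefficients, prove a Lipschitz bound for the model singular/degenerate equation, establish a comparison estimate via monotonicity and a Gehring-type reverse H\"older inequality, and then run the Caffarelli--Peral level-set iteration with the maximal function taken against the doubling measure $d\omega=\mu\,dx$. The model Lipschitz estimate is also attacked the same way as the paper's Lemma~\ref{lemmaLipschitz}: Moser iteration on the tangential derivatives after differentiating the equation, then recovering $\partial_n v$ from the flux $x_n^\alpha a_n(x_n,\nabla v)$, using its zero boundary trace and the Poincar\'e inequality in $x_n$. The minor deviations are that you freeze the coefficient to a constant, whereas the paper freezes only the $x'$-average $\bar\A_{B'_\rho}(x_n,\cdot)$ (partial BMO, which allows $x_n$-dependence and thus a weaker hypothesis at the level of Theorem~\ref{flat-Lp-thrm}); and you carry out a one-step comparison, whereas the paper separates it into two steps, first replacing the transported weight $\omega$ by the model weight $x_n^\alpha$ (Lemma~\ref{step-1}) and only then freezing the coefficients (Lemma~\ref{step-2}). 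Neither difference changes the logic, since your hypothesis \eqref{Lip-BMO} is strong enough for full-average freezing and the $\omega$-vs-$x_n^\alpha$ discrepancy is $O(\|h\|_\infty)\le O(\delta)$.

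There is, however, one genuine gap. For the estimate $\norm{u}_{L_p(\Omega,\mu)}\le N\norm{u}_{L_2(\Omega,\mu)}+N\norm{\F}_{L_p(\Omega,\mu)}$ you invoke ``a weighted Poincar\'e inequality on $\Omega$, available because $\omega$ is doubling and $\partial\Omega$ is Lipschitz.'' That justification does not suffice: doubling of the measure is necessary but far from sufficient for a $(p,p)$-Poincar\'e inequality, and the paper itself flags exactly this point in the remark after the proof of Theorem~\ref{g-theorem} --- the Fabes--Kenig--Serapioni weighted Poincar\'e requires $\mu\in A_p$, which fails once $\alpha\ge p-1$, i.e.\ precisely in the super-degenerate regime that the paper is designed to handle. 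The paper circumvents this by never appealing to an $L_p$-Poincar\'e: it first proves $\norm{\nabla u}_{L_p(\mu)}\le N\norm{\F}_{L_p(\mu)}$, then uses the weighted Sobolev embedding $W^1_2(\Omega,\mu)\hookrightarrow L_{p_1}(\Omega,\mu)$ from \cite[Remark 3.2(ii)]{DP20} and \cite{hajlasz}, which yields $\norm{u}_{L_{p_1}}\le N(\norm{u}_{L_2}+\norm{\F}_{L_p})$, and then bootstraps through a finite chain of exponents $p_1<p_2<\cdots$ until the target $p$ is reached. You should either prove (or cite a result that proves) the $(p,p)$-Poincar\'e for the specific weight $\text{dist}(x,\partial\Omega)^\alpha$ in the full range $\alpha>-1$, or replace that step by the embedding-and-bootstrap route. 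The rest of the argument, including the a priori regularity issue handled by truncation/bootstrap, the existence via Minty--Browder, and the uniqueness via testing against the difference, is sound.
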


In this paper, we also establish local interior and boundary regularity estimates for gradients of weak solutions to \eqref{main-eqn},  which could be useful for other purposes. In particular,  in Theorem \ref{flat-Lp-thrm} below, we prove local regularity estimates of the gradients of weak solutions to \eqref{main-eqn} when the boundaries of the domains are flat, and similarly in Theorem \ref{local-Lp-thm} for general domains. We also note that in Theorem \ref{flat-Lp-thrm}, we require the vector field $\A$  to be only partially VMO, a condition which was introduced in \cite{Kim-Krylov} in the study of linear equations with uniformly elliptic and bounded coefficients. The interior regularity gradient estimates are also proved in Theorem \ref{inter.est-thm}.

The proof of Theorem \ref{g-theorem} is based on a perturbation method using the freezing coefficient technique. To establish the integrability  of the gradients of solutions, we  employ the method introduced in \cite{CP} that relies on estimates of the level sets of $\nabla u$. See also \cite{CFL, Fazio-1} for the classical approach using solution representation formulas and \cite{Krylov} for an approach using the Fefferman-Stein sharp function  theorem. To  obtain the regularity estimate \eqref{main-est}, it is crucial to derive Lipschitz estimates of solutions to a class of homogeneous quasilinear equations with singular-degenerate coefficients as in \eqref{main-eqn}.  We accomplish this by first  employing the Moser iteration  argument to derive the estimates of the tangential derivatives of solutions. To estimate the normal derivative of solutions, we  exploit the structure of the PDE in \eqref{main-eqn} and its boundary condition. The results and techniques developed here to derive the Lipschitz estimates are of independent interest and they could be useful for other studies.

The remaining part of the paper is organized as follows. In Section \ref{Lip-est}, we derive the Lipschitz estimates of solutions to a class of homogeneous equations in  domains with flat boundaries. In Section \ref{local-Lp-sec}, we prove local $L_p$ regularity estimate for the gradients of solutions in  domains with flat boundaries. In the last section, Section \ref{boundary-global-section}, we provide the proof of Theorem \ref{g-theorem} which is based on the flattening of the domain boundaries and the local interior and boundary estimates developed in the previous sections.


%
\section{Lipschitz estimates of homogeneous equations in flat domains} \label{Lip-est}

We denote $\R^n_+= \R^{n-1} \times (0,\infty)$ and for $r>0$ and $x_0 = (x', x_n^0) \in \overline{\R^n_+}$, we write
\[
B_r^+(x_0) = B_r(x_0) \cap \R^n_+, \quad T_r(x_0) =   B_r(x_0) \cap \{x_n =0\}.
\]
We study the following class of equations
\begin{equation}\label{constantCoeff}
\text{div}(x_n^\alpha \A_0(x_n, \nabla u(x)))=0\quad \text{for}\quad x= (x', x_n) \in B_r^+(x_0)
\end{equation}
with the homogeneous conormal boundary condition
\begin{equation}  \label{constantCoeffBC}
      x_n^\alpha a_{n}(x_n,\nabla u(x))=0 \quad \text{on}\quad T_r(x_0)  \quad \text{if} \quad T_r(x_0)  \not= \emptyset.
\end{equation}
Here, $\alpha>-1$ is a given constant, and
$$\A_0 = (a_1, a_2, \ldots, a_n) : ((x_n^0-r)_+, x_n^0+r) \times \R^n \rightarrow \R^n$$
is a given vector field, where we denote  $s_+ =\max\{s, 0\}$ for any real number $s$. The main result of this section, Lemma \ref{lemmaLipschitz}, gives Lipschitz estimates for weak solutions to \eqref{constantCoeff}-\eqref{constantCoeffBC}.

We assume that $\A_0$ is measurable in $x_n \in ((x_n^0-r)_+, x_n^0+r)$, Lipschitz in $\xi \in \R^n\setminus\{0\}$, and it satisfies the following ellipticity and growth conditions: there is $\kappa\in (0,1)$ such that
\begin{equation}\label{elip-interior}
\left \{
\begin{aligned}
&  \kappa |\xi - \eta|^2 \leq   \wei{\A_0(x_n, \xi) -  \A_0(x_n, \eta), \xi - \eta}, \quad  \forall \ \xi, \eta \in \mathbb{R}^n, \\
& \A_0(x_n, 0) =0, \quad   \quad |D_\xi\A_{0}(x_n, \xi)| \leq \kappa^{-1}, \quad \
 \quad \forall \ \xi \in \mathbb{R}^n\setminus \{0\}
 \end{aligned} \right.
\end{equation}
for all $x_n \in (x_n^0-r)_+, x_n^0+r)$. A function $u\in W^1_2(B^+_r(x_0), \mu)$ is said to be a weak solution to \eqref{constantCoeff}-\eqref{constantCoeffBC}  if
\begin{equation} \label{loc-weak-def}
\int_{B_r^+(x_0)} x_n^\alpha \wei{\A_0(x_n, \nabla u), \nabla \varphi} dx =0, \quad
\forall \ \varphi \in C_0^\infty(B_r(x_0)).
\end{equation}
For convenience, we also denote
$$d\mu(x) =x^\alpha_n \,dx' dx_n,$$
which is a doubling measure.  Also, for a measurable function $f$ with some suitable integrability condition defined in a non-empty open set $B \subset \mathbb{R}^n$, we write
\[
(f)_B = \frac{1}{|B|} \int_B f(x)\, dx \quad \text{and}\quad \fint_B f(x)\, d\omega(x) = \frac{1}{\omega(B)} \int_B f(x)\, d\omega(x)
\]
where $\omega$ is some locally finite measure.

We start with the following lemma on local energy estimates of weak solutions to \eqref{constantCoeff}-\eqref{constantCoeffBC}.
\begin{lemma}[Caccioppoli inequality]
                \label{lem1}
Let $r>0$, $x_0\in \overline{\R^{n}_+}$, and $u\in W^1_2(B^+_r(x_0), \mu)$ be a weak solution to \eqref{constantCoeff}-\eqref{constantCoeffBC}. Then for any $c\in \R$ and $c'=(c_1, c_2,\ldots,c_{n-1})\in \R^{n-1}$,
\begin{equation} \label{assert-0427-1}
\int_{B^+_{r/2}(x_0)}|\nabla u(x)|^2\,d\mu(x)
\le Nr^{-2}\int_{B^+_{r}(x_0)}|u(x)- c|^2\,d\mu(x)
\end{equation}
and
\begin{equation} \label{assert-0427-2}
\int_{B^+_{r/2}(x_0)}|\nabla \nabla_{x'}u(x)|^2\,d\mu (x)
\le Nr^{-2}\int_{B^+_{r}(x_0)}|\nabla_{x'}u(x)- c'|^2\,d\mu (x),
\end{equation}
where $N = N(\kappa, n)>0$ and $\nabla_{x'} = \displaystyle{(\partial_{x_1}, \partial_{x_2},\ldots,  \partial_{x_{n-1}})}$.
\end{lemma}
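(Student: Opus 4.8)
The plan is to prove \eqref{assert-0427-1} by the usual Caccioppoli argument and \eqref{assert-0427-2} by a tangential difference-quotient argument. Throughout, the only properties of $d\mu(x)=x_n^\alpha\,dx$ that get used are that it is a fixed nonnegative density (locally integrable since $\alpha>-1$) and — crucially for the second estimate — that it is invariant under translations in the $x'$ variables. For \eqref{assert-0427-1}, fix $c\in\R$ and a cutoff $\zeta\in C_0^\infty(B_r(x_0))$ with $\zeta\equiv 1$ on $B_{r/2}(x_0)$, $0\le\zeta\le1$, $|\nabla\zeta|\le N/r$. By density of smooth functions in $W^1_2(B_r^+(x_0),\mu)$ (note $\varphi\in C_0^\infty(B_r(x_0))$ need not vanish on $T_r(x_0)$, which matches the conormal weak formulation \eqref{loc-weak-def}), we may take $\varphi=\zeta^2(u-c)$ in \eqref{loc-weak-def}. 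Since $\A_0(x_n,0)=0$, \eqref{elip-interior} gives $\wei{\A_0(x_n,\nabla u),\nabla u}\ge\kappa|\nabla u|^2$ and $|\A_0(x_n,\nabla u)|\le\kappa^{-1}|\nabla u|$; expanding $\nabla\varphi=\zeta^2\nabla u+2\zeta(u-c)\nabla\zeta$ and absorbing the cross term by Young's inequality yields \eqref{assert-0427-1}.

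For \eqref{assert-0427-2}, for $k\in\{1,\dots,n-1\}$, $x\in B_{3r/4}^+(x_0)$, and $0<|h|<r/8$ set $\delta^k_h u(x)=h^{-1}(u(x+he_k)-u(x))$ and $u_h(x)=u(x+he_k)$. Because $\A_0$ depends on $x_n$ only and $x_n$ is unchanged by the translation $x\mapsto x+he_k$, both $u$ and $u_h$ are weak solutions of \eqref{constantCoeff}--\eqref{constantCoeffBC} on $B_{3r/4}^+(x_0)$; subtracting the two identities in \eqref{loc-weak-def} and dividing by $h$ shows that
\[
\mathbf{g}_h(x):=\frac{\A_0(x_n,\nabla u_h(x))-\A_0(x_n,\nabla u(x))}{h}
\]
satisfies $\int x_n^\alpha\wei{\mathbf{g}_h,\nabla\varphi}\,dx=0$ for all $\varphi\in C_0^\infty(B_{3r/4}(x_0))$. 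Directly from \eqref{elip-interior} (monotonicity and the Lipschitz bound — no differentiability of $\A_0$ at the origin is needed) and $\nabla\delta^k_h u=h^{-1}(\nabla u_h-\nabla u)$, we get the pointwise estimates $\wei{\mathbf{g}_h,\nabla\delta^k_h u}\ge\kappa|\nabla\delta^k_h u|^2$ and $|\mathbf{g}_h|\le\kappa^{-1}|\nabla\delta^k_h u|$. Now repeat the Caccioppoli computation of the first part, with $u$ replaced by $\delta^k_h u$, the constant $c$ replaced by $c_k$, and $\A_0(x_n,\nabla\,\cdot\,)$ replaced by $\mathbf{g}_h$: testing with $\zeta^2(\delta^k_h u-c_k)$, where now $\zeta\equiv1$ on $B_{r/2}(x_0)$ and $\operatorname{supp}\zeta\subset B_{3r/4}(x_0)$, gives
\[
\int_{B_{r/2}^+(x_0)}|\nabla\delta^k_h u|^2\,d\mu\le Nr^{-2}\int_{B_{3r/4}^+(x_0)}|\delta^k_h u-c_k|^2\,d\mu,
\]
with $N=N(\kappa,n)$ independent of $h$.

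To finish, note $\delta^k_h u-c_k=\delta^k_h(u-c_kx_k)$, and since $x_n^\alpha$ does not depend on $x_k$, Jensen's inequality (writing $\delta^k_h w(x)=h^{-1}\int_0^h\partial_{x_k}w(x+te_k)\,dt$) gives, for $|h|$ small, the weighted comparison $\int_{B_{3r/4}^+(x_0)}|\delta^k_h u-c_k|^2\,d\mu\le\int_{B_r^+(x_0)}|\partial_{x_k}u-c_k|^2\,d\mu$. Hence $\delta^k_h(\nabla u)$ is bounded in $L_2(B_{r/2}^+(x_0),\mu)$ uniformly in $h$, so $\partial_{x_k}\nabla u\in L_2(B_{r/2}^+(x_0),\mu)$ and $\delta^k_h(\nabla u)\rightharpoonup\partial_{x_k}\nabla u$ weakly along a subsequence; passing to the limit using weak lower semicontinuity on the left and $\delta^k_h u\to\partial_{x_k}u$ in $L_2(B_r^+(x_0),\mu)$ on the right, and then summing over $k=1,\dots,n-1$ (so that $\sum_k|\partial_{x_k}\nabla u|^2=|\nabla\nabla_{x'}u|^2$ and $\sum_k|\partial_{x_k}u-c_k|^2=|\nabla_{x'}u-c'|^2$), yields \eqref{assert-0427-2}. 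The main obstacle is entirely in this second estimate: rigorously handling the nonlinear term under difference quotients — resolved by working with $\mathbf{g}_h$ directly rather than a linearized matrix — and passing to the limit $h\to0$ against the degenerate weight, which is clean precisely because we differentiate only tangentially, so $x_n^\alpha$ is translation-invariant and the flat portion $T_r(x_0)$ of the boundary is unaffected (tangential translation maps half-ball domains to half-ball domains, and the conormal formulation allows test functions nonzero on $T_r(x_0)$).
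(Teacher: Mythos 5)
Your proof is correct and follows essentially the same route as the paper: part (i) is the standard Caccioppoli argument with test function $\zeta^2(u-c)$, and part (ii) is obtained by differentiating tangentially, which the paper does formally (introducing the linearized equation \eqref{eq2.56} with the remark that a difference quotient argument justifies it) while you carry out the difference quotient argument rigorously. The only stylistic difference is that you avoid forming the linearized matrix $\partial_{\xi_j}\A_0$ and instead work directly with $\mathbf{g}_h$, using only the monotonicity and Lipschitz bounds from \eqref{elip-interior}; this is a slightly more self-contained version of the same idea and correctly exploits the tangential translation invariance of $x_n^\alpha$ and of $T_r(x_0)$.
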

\begin{proof}
Let $\zeta\in C_0^\infty(B_r(x_0))$ be a non-negative cut-off function satisfying
\[ \zeta=1 \quad \text{in} \quad B_{r/2}(x_0) \quad \text{and} \quad \|\nabla \zeta\|_{L_\infty} \leq \frac{N_0}{r},
\]
for some generic constant $N_0>0$.  Using  $(u-c)\zeta^2$ as the test function in \eqref{loc-weak-def}, we obtain
\[
\int_{B_r^+(x_0)} \wei{\A_0(x_n, \nabla u), \nabla u } \zeta^2 d\mu(x)
+ 2 \int_{B_r^+(x_0)} \wei{\A_0(x_n, \nabla u), \nabla \zeta} \zeta (u- c) d\mu (x) =0.
\]
From this, and \eqref{elip-interior}, we infer that
\[
\int_{B_r^+(x_0)} |\nabla u(x)|^2  \zeta(x)^2 d\mu(x)  \leq N\int_{B_r^+(x_0)} |\nabla u(x)| |\nabla \zeta(x)| |u(x)- c| |\zeta| d\mu (x)
\]
where $N = N(\kappa, n) >0$. From this, we follow the standard method using Young inequality to derive the estimate \eqref{assert-0427-1}.

Next, we prove \eqref{assert-0427-2}. By using a different quotient method if needed, we can formally differentiate the equation \eqref{constantCoeff} with respect to $x_k, k=1, 2, \ldots,n-1,$ to see that $u_k:=\partial_k u$ satisfies a linear elliptic equation
\begin{equation}
                    \label{eq2.56}
\text{div}(x_n^\alpha \partial_{\xi_j} \A_0(x_n, \nabla u)\partial_j u_k)=0 \quad \text{in} \quad B_r^+(x_0)
\end{equation}
with the same conormal boundary condition \eqref{constantCoeffBC}, where $\partial_k =  \partial_{x_k}$ and the Einstein summation convention is used. Now we test the equation with $(u_k-c_k)\zeta^2$ and use the ellipticity and boundedness condition in \eqref{elip-interior} as we just did to derive \eqref{assert-0427-2}. The proof of the lemma is completed.
\end{proof}

\begin{lemma}
                        \label{lem2}
Let $r>0$, $x_0\in \overline{\R^{n}_+}$, and $u\in W^1_2(B^+_r(x_0), \mu)$ be a weak solution to \eqref{constantCoeff}-\eqref{constantCoeffBC}. Then
$$
\|\nabla_{x'}u\|_{L_\infty(B_{r/2}^+(x_0))}
\le N\left(\fint_{B^+_{r}(x_0)}|\nabla_{x'}u(x)|^2\,d\mu(x)\right)^{1/2},
$$
where $N = N(\kappa, n, \alpha)>0$.
\end{lemma}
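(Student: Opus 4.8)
The plan is to run Moser's iteration on the linear equation satisfied by the tangential derivatives of $u$. First, using a tangential difference-quotient argument (the operators $\partial_{x_k}$ with $k\le n-1$ do not interact with the flat boundary $T_r(x_0)$, so tangential difference quotients preserve the class $C_0^\infty(B_r(x_0))$ up to a shrinking of supports), one differentiates \eqref{constantCoeff}--\eqref{constantCoeffBC} in $x_k$ and obtains that, for each $k=1,\dots,n-1$, the function $v:=\partial_{x_k}u$ is a weak solution of
\begin{equation*}
\mathrm{div}\big(x_n^\alpha\, a_{ij}(x)\, \partial_j v\big)=0 \quad\text{in } B_r^+(x_0),\qquad x_n^\alpha\, a_{nj}(x)\,\partial_j v=0 \quad\text{on } T_r(x_0),
\end{equation*}
tested against all $\varphi\in C_0^\infty(B_r(x_0))$ (not necessarily vanishing on $T_r(x_0)$), where $a_{ij}(x)=\partial_{\xi_j}\A_0(x_n,\nabla u(x))$ on $\{\nabla u\neq 0\}$ and $a_{ij}(x)=\kappa\delta_{ij}$ otherwise; since $v\in W^1_2(B_{r'}^+(x_0),\mu)$ for $r'<r$ by \eqref{assert-0427-2} and $\nabla v=0$ a.e.\ on $\{\nabla u=0\}$, this redefinition is harmless. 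Differentiating the monotonicity inequality in \eqref{elip-interior} and using the bound on $D_\xi\A_0$, the matrix $(a_{ij})$ is measurable with $\kappa|\lambda|^2\le a_{ij}(x)\lambda_i\lambda_j$ and $|a_{ij}(x)|\le\kappa^{-1}$. Hence it suffices to prove the local boundedness estimate $\|v\|_{L_\infty(B_{\theta r}^+(x_0))}\le N\big(\fint_{B_r^+(x_0)}|v|^2\,d\mu\big)^{1/2}$ for weak solutions of such linear singular/degenerate equations with a conormal condition on $T_r(x_0)$, and then to sum over $k=1,\dots,n-1$, using $\sum_k|\partial_{x_k}u|^2=|\nabla_{x'}u|^2$ together with the doubling property of $\mu$ to pass between nearby radii.

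For the local boundedness estimate I would use the standard Moser iteration adapted to $d\mu=x_n^\alpha\,dx$. Testing the equation for $v$ with $\zeta^2 G_m(v)$, where $\zeta\in C_0^\infty(B_r(x_0))$ is a cutoff (nonzero on $T_r(x_0)$, so that the conormal condition makes the boundary term on $T_r(x_0)$ vanish automatically) and $G_m$ is a truncated version of $s\mapsto s|s|^{2\beta}$, and using the ellipticity and boundedness of $(a_{ij})$ with Young's inequality, yields the Caccioppoli-type bound
\begin{equation*}
\int_{B_r^+(x_0)}\zeta^2\,\big|\nabla\big(|v|^\beta v\big)\big|^2\,d\mu\le N(\beta)\int_{B_r^+(x_0)}|\nabla\zeta|^2\,|v|^{2\beta+2}\,d\mu .
\end{equation*}
Combining this with a weighted Sobolev inequality on half-balls, namely
\begin{equation*}
\Big(\fint_{B_\rho^+(x_0)}|w|^{2\chi}\,d\mu\Big)^{1/\chi}\le N\rho^2\fint_{B_\rho^+(x_0)}|\nabla w|^2\,d\mu+N\fint_{B_\rho^+(x_0)}|w|^2\,d\mu
\end{equation*}
for some $\chi=\chi(n,\alpha)>1$, gives a reverse-H\"older gain for $|v|$ at each step. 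Iterating over the radii $\rho_m=\theta r+2^{-m}(1-\theta)r$ and the exponents $\chi^m$, with a routine summability check on the constants, produces the desired $L_\infty$ bound; a covering and scaling argument (plus doubling of $\mu$) upgrades the pair of radii to $(r/2,r)$ and replaces the average over a slightly larger half-ball by that over $B_r^+(x_0)$.

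The main technical point is the weighted Sobolev inequality in the full range $\alpha\in(-1,\infty)$: for $\alpha\in(-1,1)$ the weight $x_n^\alpha$ belongs to the Muckenhoupt class $A_2$ and the inequality follows from the Fabes--Kenig--Serapioni theory of degenerate equations; for $\alpha\ge 1$ the weight is no longer in $A_2$, but $d\mu$ is still a doubling measure supporting a $(1,2)$-Poincar\'e inequality, which yields a Sobolev--Poincar\'e inequality with some gain $\chi>1$ (such weighted estimates are available in \cite{DP19,DP20} on singular/degenerate equations, or can be proved directly for power weights, e.g.\ by viewing $x_n^\alpha\,dx_n$ as a radial measure in an auxiliary higher-dimensional space when $\alpha$ is an integer and interpolating/approximating otherwise). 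Once this inequality is in hand the remaining steps are routine, so the only genuine obstacles are ensuring that the Sobolev machinery goes through beyond the $A_2$ range and that the conormal boundary condition is correctly incorporated by using cutoffs that do not vanish on $T_r(x_0)$.
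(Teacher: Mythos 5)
Your proposal follows the paper's route exactly: differentiate the equation in a tangential direction to obtain a linear singular/degenerate divergence-form equation with a conormal boundary condition for $v=\partial_{x_k}u$ (the regularity needed to justify this coming from Lemma~\ref{lem1}), and then run Moser's iteration using a weighted Sobolev--Poincar\'e inequality for the doubling measure $d\mu=x_n^\alpha\,dx$ that is valid on the full range $\alpha\in(-1,\infty)$. The paper compresses the iteration step to citations of \cite[Lemma 4.3]{DP20} and \cite[Proposition 2.17]{vita}, whereas you spell out the truncation functions, the Caccioppoli bound, and the iteration scheme, but the substance and the key technical point (the weighted Sobolev inequality beyond the $A_2$ range, plus the use of test functions not vanishing on $T_r(x_0)$) are the same.
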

\begin{proof} By Lemma \ref{lem1}, we see that $\partial_k u$ is in $W^{1}_{2}(B^+_r(x_0))$ locally and it is a weak solution to the linear equation \eqref{eq2.56} with the boundary condition \eqref{constantCoeffBC}, for $k=1,2,\ldots, n-1$. Then, the assertion of the lemma follows by applying the Moser iteration argument  to the linear equation \eqref{eq2.56}. See \cite[Lemma 4.3]{DP20} for a similar result but for linear parabolic equations, and also \cite[Proposition 2.17]{vita} for a result for linear elliptic equations. We skip the details.
\end{proof}

The following lemma is our main result on Lipschitz estimates for weak solutions to \eqref{constantCoeff}-\eqref{constantCoeffBC}.
\begin{lemma}
                                    \label{lemmaLipschitz}
Let $r>0$, $x_0 = (x_0', x_n^0) \in \overline{\R^{n}_+}$, and $u\in W^1_2(B^+_r(x_0), \mu)$ be a weak solution to \eqref{constantCoeff}-\eqref{constantCoeffBC}. Then
\begin{equation}
                    \label{eq3.27}
\|\nabla u\|_{L_\infty(B^+_{r/2}(x_0))}
\le N\left(\fint_{B^+_{r}(x_0)}|\nabla u (x)|^2\,d\mu(x)\right)^{1/2},
\end{equation}
where $N = N(\kappa, n, \alpha)>0$.
\end{lemma}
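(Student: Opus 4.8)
The plan is to estimate the tangential part $\nabla_{x'}u$ and the normal derivative $\partial_n u$ of the gradient separately. After a translation in the $x'$ variables and a dilation we may take $x_0'=0$ and $r$ fixed, and by a standard covering argument it suffices to establish the estimate in two situations: a boundary estimate, $x_0\in\partial\R^n_+$ with $T_r(x_0)\neq\emptyset$, and an interior estimate, $B_r(x_0)\subset\R^n_+$, where the weight $x_n^\alpha$ is comparable to a constant on the relevant balls. In either case Lemma \ref{lem2} already gives
\[
\|\nabla_{x'}u\|_{L_\infty(B^+_{r/2}(x_0))}\le N\Big(\fint_{B^+_{r}(x_0)}|\nabla u(x)|^2\,d\mu(x)\Big)^{1/2}
\]
since $|\nabla_{x'}u|\le|\nabla u|$, so the whole task is to bound $\partial_n u$ by the same right-hand side.

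For the normal derivative I would first record the pointwise algebraic inequality that comes from the structure conditions: taking $\eta=(\xi',0)$ in the monotonicity inequality of \eqref{elip-interior} gives $\kappa|\xi_n|^2\le\big(a_n(x_n,\xi)-a_n(x_n,\xi',0)\big)\xi_n$, while $\A_0(x_n,0)=0$ together with the Lipschitz bound on $D_\xi\A_0$ gives $|a_n(x_n,\xi',0)|\le\kappa^{-1}|\xi'|$; hence $|\partial_n u(x)|\le\kappa^{-1}|a_n(x_n,\nabla u(x))|+\kappa^{-2}|\nabla_{x'}u(x)|$. It therefore remains to control the conormal flux $w(x):=x_n^\alpha a_n(x_n,\nabla u(x))$, divided by $x_n^\alpha$, in $L_\infty(B^+_{r/2}(x_0))$, and this is the step that uses the structure of the PDE and of the boundary condition. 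The boundary condition \eqref{constantCoeffBC} says precisely that $w=0$ on $T_r(x_0)$, while testing \eqref{loc-weak-def} with functions supported in the open half-space gives, distributionally, $\partial_n w=-\sum_{i=1}^{n-1}\partial_i\big(x_n^\alpha a_i(x_n,\nabla u)\big)=-x_n^\alpha\,\mathrm{div}_{x'}\big(\A_0'(x_n,\nabla u)\big)$, where $\A_0'=(a_1,\dots,a_{n-1})$. Integrating in $x_n$ from the boundary and using $w|_{x_n=0}=0$,
\[
a_n(x_n,\nabla u(x',x_n))=-\,x_n^{-\alpha}\int_0^{x_n}t^\alpha\,\mathrm{div}_{x'}\big(\A_0'(t,\nabla u(x',t))\big)\,dt ,
\]
and the integrand is bounded pointwise, via \eqref{elip-interior}, by $N|\nabla\nabla_{x'}u(x',t)|$; the weight $t^\alpha$, together with $\alpha>-1$, is exactly what makes the normalizing factor $x_n^{-\alpha}$ harmless after integration. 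The remaining point is thus to control $\nabla\nabla_{x'}u$ along normal lines, for which one starts from Lemma \ref{lem1}, in particular from \eqref{assert-0427-2} applied to $u$ and to the derivatives $\partial_k u$ solving the linearized equation \eqref{eq2.56}, and iterates. In the interior case one argues in the same way, with the vanishing of $w$ on $T_r(x_0)$ replaced by averaging $w$ along a vertical segment.

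The main obstacle is precisely this last step. Condition \eqref{elip-interior} only gives that $\A_0$ is Lipschitz in $\xi$, so the nonlinearity cannot be differentiated twice and the linearized equation \eqref{eq2.56} has merely bounded measurable coefficients; consequently $\nabla\nabla_{x'}u$ is a priori only in $L^2(d\mu)$ and not in $L_\infty$. Converting this $L^2$ information into a genuine pointwise bound for $w/x_n^\alpha$ in the representation above — uniformly for all $\alpha\in(-1,\infty)$, in particular in the super-degenerate range $\alpha\ge1$, where neither $x_n^\alpha$ nor $x_n^{-\alpha}$ is an $A_2$ weight — is the delicate core of the argument; it is carried out by reproving, with the right scaling, weighted Caccioppoli- and Moser-type estimates along a nested family of half-balls, using the vanishing of $w$ on $T_r(x_0)$ and the favorable weight $t^\alpha$ in tandem, and this is the part of the proof that is of independent interest.
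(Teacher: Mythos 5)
Your plan has the right skeleton — Lemma \ref{lem2} handles $\nabla_{x'}u$, the object to control is $U=x_n^\alpha a_n(x_n,\nabla u)$, and the pointwise inequality $|\partial_n u|\le \kappa^{-1}|a_n(x_n,\nabla u)|+\kappa^{-2}|\nabla_{x'}u|$ (from monotonicity with $\eta=(\xi',0)$) is exactly the right reduction. Your observation that $\partial_n U=-\sum_{i<n}\partial_i\big(x_n^\alpha a_i(x_n,\nabla u)\big)$ and $U|_{x_n=0}=0$ is also correct. But at the critical moment the argument has a genuine gap, and you yourself flag it without resolving it: the representation $a_n(x_n,\nabla u(x',x_n))=-x_n^{-\alpha}\int_0^{x_n}t^\alpha\,\mathrm{div}_{x'}(\A_0'(t,\nabla u(x',t)))\,dt$ requires knowing the integrand pointwise along the vertical segment, i.e.\ a pointwise (or at least one-dimensional $L^1$ along a.e.\ line) bound on $\nabla\nabla_{x'}u$. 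Under \eqref{elip-interior} the vector field is only Lipschitz in $\xi$; the linearized equation \eqref{eq2.56} has merely bounded measurable coefficients, so after Caccioppoli one only has $\nabla\nabla_{x'}u\in L^2(d\mu)$ locally. Moser iteration gives $L_\infty$ for solutions, which is how Lemma \ref{lem2} bounds $\nabla_{x'}u$, but $\nabla\nabla_{x'}u$ is not a solution, and no ``Moser-type estimate along nested half-balls'' bounds it in $L_\infty$. Your proposal names this as ``the delicate core'' but does not supply the missing step.

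The paper closes the gap by \emph{not} going for a pointwise line integral. Instead, from \eqref{eq4.24}--\eqref{eq4.25} one has the crucial pointwise inequality $|\nabla U|\le N x_n^\alpha|\nabla\nabla_{x'}u|$, whence $|\nabla U|^2 x_n^{-\alpha}\le N x_n^\alpha|\nabla\nabla_{x'}u|^2$, and the weighted Caccioppoli estimate \eqref{assert-0427-2} then controls $\fint_{B_s^+}|\nabla U|^2 x_n^{-\alpha}\,dx$. Combining this with H\"older (splitting $|\nabla U|=|\nabla U| x_n^{-\alpha/2}\cdot x_n^{\alpha/2}$), the boundary Poincar\'e inequality (using $U|_{x_n=0}=0$), and Lemma \ref{lem2}, one obtains a \emph{uniform bound on the $\mu$-averages} $\fint_{B_s^+(y_0)}|a_n(x_n,\nabla u)|\,d\mu$ over small half-balls at scale $s$. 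The $L_\infty$ conclusion then follows from the Lebesgue differentiation theorem for the doubling measure $\mu$, not from any pointwise bound on second derivatives. In the interior case the role of the boundary condition is played by Campanato's characterization of H\"older continuity for $U$. This shift — estimating averaged fluxes in the right weighted spaces and invoking Lebesgue differentiation — is the step your proposal is missing, and without it the argument does not close.
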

\begin{proof}
By Lemma \ref{lem2}, it remains to prove the estimate of $\partial_{n} u$. Denote $$
U(x) =x_n^\alpha a_n(x_n, \nabla u(x)), \quad x = (x', x_n) \in B_r^+(x_0).
$$
Note that from \eqref{constantCoeff}, it follows that
\begin{align} \notag
\partial_{n} U(x', x_n) & = -\sum_{i=1}^{n-1}\partial_i [x_n^\alpha a_i(x_n, \nabla u(x)) ]\\  \label{eq4.24}
& =-x_n^\alpha \sum_{i=1}^{n-1}\sum_{j=1}^{n}\partial_{\xi_j}a_{i}(x_n, \nabla u(x))\partial_{ij}u(x).
\end{align}
On the other hand, by a direct calculation, we also have
\begin{equation}
                \label{eq4.25}
\nabla_{x'}U (x', x_n)=x_n^\alpha \sum_{j=1}^{n-1}\partial_{\xi_j}a_{n}(x_n, \nabla u(x))\partial_{j}\nabla_{x'}u(x).
\end{equation}
By a covering argument, we only need to discuss two cases: the interior case when $x_{n}^0 \geq 2r$ and the boundary case when $x_0\in \partial\R^n_+$.

\smallskip
\noindent
{\em Case 1: Interior case.}  By scaling, for simplicity we also assume that $x_{n}^0=1$. Then for $x = (x', x_n) \in B_{2/3}(x_0)$, $x_n\sim 1$ such that $d\mu\sim dx$, and the equation \eqref{eq2.56} is uniformly elliptic in $B_{2/3}(x_0)$.  It follows that $\nabla_{x'} u$ is H\"older continuous in $\overline{B}_{5/8}(x_0)$.
By the Poincar\'e inequality, \eqref{eq4.24}, \eqref{eq4.25}, the boundedness condition in \eqref{elip-interior}, and Lemma \ref{lem1}, for any $y_0\in B_{r/2}(x_0)$ and $s<r/4$,
\begin{align*}
                \label{eq3.31}
&\fint_{B_{s/2}(y_0)}|U(x)-(U)_{B_{s/2}(y_0)}|^2\,dx
\le Ns^2 \fint_{B_{s/2}(y_0)}|\nabla U(x)|^2\,dx\\
&\le Ns^2 \fint_{B_{s/2}(y_0)}|\nabla \nabla_{x'}u(x)|^2\,dx\\
&\le N\fint_{B_{s}(y_0)}|\nabla_{x'}u(x)-(\nabla_{x'}u)_{B_{s}(y_0)}|^2\,dx,
\end{align*}
which together with the H\"older estimate of $\nabla_{x'} u$ implies that $U$ is H\"older continuous by using Campanato's characterization of H\"older spaces. In particular,
\begin{equation}
                \label{eq3.31}
\|a_n(y, \nabla u)\|_{L_\infty(B_{r/2}(x_0))}
\le N\|U\|_{L_\infty(B_{r/2}(x_0))}
\le N\left(\fint_{B_{r}(x_0)}|\nabla u(x)|^2\,dx\right)^{1/2},
\end{equation}
which together with \eqref{elip-interior} implies
\[
\|\partial_n u\|_{L_\infty(B_{r/2}(x_0))} \leq N\left(\fint_{B_{r}(x_0)}|\nabla u(x)|^2\,dx\right)^{1/2}.
\]
Therefore, \eqref{eq3.27} is proved as $d\mu \sim dx$ in this case.

\smallskip
\noindent
{\em Case 2: Boundary case.}  Without loss of generality, we take $x_0=0$ and $r=1$. We claim that for any $y_0 = (y_0', y_n^0) \in \overline{B^+_{1/2}}$ and $s<1/20$,
\begin{align}
                                \label{eq5.00}
\fint_{B_{s}^+(y_0)}|a_n(x_n, \nabla u(x))|\,d\mu(x)
\le N\left(\fint_{B_{1}^+}|\nabla_{x'}u(x)|^2 \,d\mu(x)\right)^{1/2}.
\end{align}
This and the Lebesgue differentiation theorem (with the doubling measure $\mu$) give
$$
\|a_n(\cdot, \nabla u(\cdot))\|_{L_\infty(B^+_{1/2})}
\le N\left(\fint_{B^+_{1}}|\nabla u(x)|^2\,d\mu(x)\right)^{1/2}
$$
and consequently \eqref{eq3.27}.

Next we prove the claim. First we assume that $y_{0}^{n}=0$.
Using \eqref{eq4.24}, \eqref{eq4.25}, the boundedness condition in \eqref{elip-interior}, {and Lemma \ref{lem1},} we have for any $s<1/4$,
\begin{align*}
&\fint_{B_{s}^+(y_0)}|\nabla U(x, x_n)|^2 x_n^{-\alpha}\,dx\le N\fint_{B_{s}^+(y_0)}|\nabla \nabla_{x'}u(x', x_n)|^2 x_n^{\alpha}\,dx\\
&\le Ns^{-2}\fint_{B_{2s}^+(y_0)}|\nabla_{x'}u(x', x_n)|^2 x_n^{\alpha}\,dx.
\end{align*}
This together with H\"older's inequality gives
\begin{align*}
\fint_{B_{s}^+(y_0)}|\nabla U(x)|\,dx
&\le \left(\fint_{B_{s}^+(y_0)}|\nabla U(x', x_n)|^2 x_n^{-\alpha}\,dx\right)^{1/2}
\left(\fint_{B_{s}^+(y_0)}x_n^{\alpha}\,dx\right)^{1/2}\\
&\le Ns^{\alpha/2-1}\left(\fint_{B_{2s}^+(y_0)}|\nabla_{x'}u(x', x_n)|^2 x_n^{\alpha}\,dx\right)^{1/2}.
\end{align*}
By using the zero boundary condition for $U$ at $\{x_n =0\}${ and the boundary Poincar\'e inequality}, we then get
\begin{align*}
\fint_{B_{s}^+(y_0)}|U(x', x_n)|\,dx& \le Ns\fint_{B_{s}^+(y_0)}|\partial_nU(x', x_n)|\,dx\\
&\le Ns^{\alpha/2}\left(\fint_{B_{2s}^+(y_0)}|\nabla_{x'}u(x', x_n)|^2 x_n^{\alpha}\,dx\right)^{1/2},
\end{align*}
which implies that
\begin{align*}
\fint_{B_{s}^+(y_0)}|a_n(x_n, \nabla u(x))|\,d\mu
\le N\left(\fint_{B_{2s}^+(y_0)}|\nabla_{x'}u(x)|^2\,d\mu(x)\right)^{1/2}.
\end{align*}
By Lemma \ref{lem2}, we obtain \eqref{eq5.00} in this case.

For the general case, when $s\ge y_{n}^0/4$, we have $B_s(y_0)\subset B_{5s}(y_0',0)$, and \eqref{eq5.00} follows from the first case because $5s<1/4$.
When $s<y_{n}^0/4$, we use \eqref{eq3.31} to get
\begin{align*}
&\fint_{B_{s}(y_0)}|a_n(x_n, \nabla u(x))|\,d\mu
\le \|a_n(\cdot, \nabla u(\cdot))\|_{L_\infty(B_{s}(y_0))}\\
&\le \|a_n(\cdot , \nabla u(\cdot))\|_{L_\infty(B_{y_{n}^0/4}(y_0))}
\le N\left(\fint_{B_{y_{n}^0/2}^+(y_0)}|\nabla_{x'}u(x)|^2\,d\mu(x)\right)^{1/2}.
\end{align*}
Then the claim follows from the previous case with $s=y_{n}^0/2$.
\end{proof}
\begin{remark}
Since we can have a H\"older estimate instead of the $L_\infty$ estimate in Lemma \ref{lem2}, {it is possible to} bound the H\"older norm of $a_n(y, \nabla u)$ in the above lemma. However, we will not use this in the proofs below.
\end{remark}
\section{Boundary \texorpdfstring{$L_p$}{} regularity estimates in flat domains} \label{local-Lp-sec}

In this section, we establish local boundary $L_p$ estimates for the gradients of solutions to the quasilinear equation of the form \eqref{main-eqn} when the boundary $\partial \Omega$ is flat. Its main result is Theorem \ref{flat-Lp-thrm} below. To state the result, we need some notation and definitions. For each $x= (x', x_n) \in \mathbb{R}^{n-1} \times \mathbb{R}$, we write $D_\rho(x) = B_\rho'(x') \times (x_n -\rho, x_n + \rho)$ and
\[
D_\rho^+ (x)= D_\rho(x) \cap \mathbb{R}^n_+,
\]
where $B_\rho'(x')$ denotes the ball in $\mathbb{R}^{n-1}$ of radius $\rho>0$ and centered at $x' \in \mathbb{R}^{n-1}$. We also denote
$$
\Hat \Gamma_\rho(x) = \partial D_\rho^+(x) \cap \{x_n =0\}.
$$
When $x = 0$, we simply write $D_\rho^+ = D_\rho^{+}(0)$,  etc.
Let
$$\A = (A_1, A_2,\ldots, A_n) : D_2^+ \times \mathbb{R}^n \rightarrow \mathbb{R}^n$$
be measurable with respect to $x \in D_2^+$ and Lipschitz in $\xi \in \mathbb{R}^n \setminus \{0\}$. We assume that $\A$ satisfies the ellipticity and growth conditions as \eqref{structure-A} in $D_2^+$. Precisely, there exists $\kappa \in (0,1)$ such that
\begin{equation} \label{A-local}
\left\{
\begin{aligned}
&  \kappa |\xi - \eta|^2 \leq   \wei{\A(x, \xi) -  \A(x, \eta), \xi - \eta}, \quad  \forall \ \xi, \eta \in \mathbb{R}^n, \ \forall \ x \in D_2^+, \\
& \A(x, 0) =0, \quad   \quad |\A_{\xi}(x, \xi)| \leq \kappa^{-1}, \quad \
 \quad \forall \ \xi \in \mathbb{R}^n\setminus \{0\}, \quad \forall \ \ x \in D_2^+.
\end{aligned} \right.
\end{equation}
Let $\omega: \overline{D_2^+} \rightarrow \mathbb{R}_+$ be a weight defined by
\begin{equation} \label{omega.cond}
\omega(x) = x_n^\alpha (1- h(x))^\alpha,  \quad  \forall x = (x', x_n) \in D_2^+
\end{equation}
with $\alpha \in (-1, \infty)$ and $h: \overline{D^+_2} \rightarrow [0, 1)$ is a measurable function satisfying $\|h\|_{L_\infty(D_2^+)} <1$.

We study the following equation
\begin{equation} \label{D2.eqn}
\left\{
\begin{array}{cccl}
\div[\omega(x)\A(x, \nabla u(x))] & = & \div[\omega(x)\F] & \quad \text{in} \quad D_2^+ \\
\omega(x) A_n(x, \nabla u(x)) & = & \omega(x)F_n (x)& \quad \text{on} \quad  \Hat \Gamma_2,
\end{array}
\right.
\end{equation}
{where $\F = (F_1, F_2, \ldots, F_n) : D_2^+ \rightarrow \mathbb{R}^n$ is a given measurable vector-field}.
For $p \in (1, \infty)$, we say that $u \in W^{1}_{p}(D_2^+, \omega)$ is a weak solution of \eqref{D2.eqn} if
\[
\int_{D_2^+} \omega(x) \wei{\A(x,\nabla u(x)), \nabla \varphi(x)} \,dx = \int_{D_2^+}\omega(x) \wei{\F(x), \nabla \varphi(x)} \,dx
\]
for any $\varphi \in C^\infty_0( D_2)$.

We also need the following partial mean oscillation of the coefficients $\A$ which was introduced in \cite{Kim-Krylov}. 
\begin{definition} \label{PBMO-def} For $\rho>0$ and $x_0 = (x'_0, x_n^0) \in \overline{D_1^+}$, the partial mean oscillation of a given vector field $\A: D_2^+ \times  \mathbb{R}^n \rightarrow \mathbb{R}^n$ in $D_{\rho}^+(x_0)$ with respect to the weight $\omega$ is defined by
\begin{equation*}
\begin{split}
\hat{\Theta}_{\rho, x_0}(\A) =&  \frac{1}{\omega(D_\rho^+(x_0))}\int_{D_\rho^+(x_0)} \omega(x) \left(\sup_{\xi \in \mathbb{R}^n \setminus \{0\}} \frac{|\A(x, \xi) - \bar{\A}_{B_\rho'(x_0')}(x_n, \xi)|}{|\xi|}\right) \,dx,
\end{split}
\end{equation*}
where $\bar{\A}_{B_\rho'(x_0')}(x_n, \xi)$ is the average of $\A$ in the $(n-1)$-dimensional ball $B_\rho'(x_0')$ defined by
\[
\bar{\A}_{B_\rho'(x_0')}(x_n, \xi) = \frac{1}{|B_\rho'(x_0')|}\int_{B_\rho'(x_0')} \A(x,\xi) \,dx'.
\]
\end{definition}

The main result of this section is the following theorem.

\begin{theorem} \label{flat-Lp-thrm}
For every $\kappa  \in (0, 1), p \in [2, \infty)$, and $\alpha \in (-1, \infty)$, there exists a sufficiently small positive number $\delta_0 = \delta_0(\kappa, p, \alpha, n)$ such that the following assertion holds.  Assume that $\A$ and $\omega$ satisfy \eqref{A-local}, \eqref{omega.cond}, $\|h\|_{L_\infty(D_2^+)} \leq \delta_0$, and
\[
\sup_{\rho \in (0,R_0)} \sup_{x \in \overline{D_1^+}}\hat{\Theta}_{\rho, x}(\A) \leq \delta_0,
\]
for some $R_0 \in (0,1)$. Then, if $u \in W^1_2(D_2^+, \omega)$ is a weak solution of \eqref{D2.eqn} with $\F \in L_p(D_2^+, \omega)$, we have $\nabla u \in L_p(D_1^+, \omega)$ and
\[
\|\nabla u\|_{L_p(D_1^+, \omega)} \leq N \|\nabla u\|_{L_2(D_2^+,\omega)} + N\|\F\|_{L_p(D_2^+, \omega)}
\]
for some constant $N = N(\kappa, p, \alpha, R_0, n)>0$.
\end{theorem}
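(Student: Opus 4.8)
The plan is to follow the standard perturbation-plus-level-set scheme (in the style of Caffarelli--Peral, as the authors announce), with the Lipschitz estimate of Lemma~\ref{lemmaLipschitz} playing the role of the ``good regularity'' input for the frozen homogeneous problem. First I would record the weighted analogues of the basic tools adapted to the measure $d\omega$: since $\omega(x) = x_n^\alpha(1-h(x))^\alpha$ and $1-h$ is bounded above and below away from $0$ and $\infty$ (by $\|h\|_{L_\infty}\le\delta_0<1$), the measure $\omega$ is comparable to $d\mu = x_n^\alpha\,dx$ on $D_2^+$, hence $\omega$ is a doubling measure on the relevant balls, the Hardy--Littlewood maximal function $\mathcal{M}_\omega$ is bounded on $L_q(\omega)$ for $q>1$, and the weighted Poincaré and Sobolev--Poincaré inequalities on $D_\rho^+(x_0)$ and $B_\rho^+(x_0)$ hold with constants independent of the center (using that $x_n^\alpha$ is an $A_2$-type weight in the $x_n$ variable only for $\alpha\in(-1,1)$, but the needed Poincaré inequalities survive for all $\alpha>-1$ by the one-dimensional Hardy inequality in the normal variable). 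I would also fix a Calderón--Zygmund / Vitali-type decomposition valid for the doubling measure $\omega$ on cylinders $D_\rho^+$.

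The core is a comparison estimate. Fix $x_0\in\overline{D_1^+}$ and a small radius $\rho<R_0$. On $D_\rho^+(x_0)$ I would replace $u$ by the solution $v$ of the homogeneous problem with the coefficient frozen in $x'$ and the weight's $h$-factor removed: namely $v$ solves $\div(x_n^\alpha \bar{\A}_{B_\rho'(x_0')}(x_n,\nabla v)) = 0$ in $B_{c\rho}^+(x_0)$ with the conormal condition $x_n^\alpha \bar a_n(x_n,\nabla v)=0$ on the flat part, and $v=u$ on the remaining boundary. Lemma~\ref{lemmaLipschitz} gives $\|\nabla v\|_{L_\infty(B_{c\rho/2}^+(x_0))}\le N\big(\fint_{B_{c\rho}^+(x_0)}|\nabla v|^2\,d\mu\big)^{1/2}$, which after an energy comparison is controlled by $\fint_{D_\rho^+(x_0)}|\nabla u|^2\,d\omega$. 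The difference $w=u-v$ is estimated in energy by testing the equation for $w$: the right-hand side produces three error terms, one from $\F$, one from the oscillation of $\A$ in $x'$ (controlled by $\hat\Theta_{\rho,x_0}(\A)\le\delta_0$ via the Lipschitz-in-$\xi$ bound in \eqref{A-local}), and one from the discrepancy between $\omega$ and $x_n^\alpha$ on $D_\rho^+(x_0)$ (controlled by the oscillation of $(1-h)^\alpha$, i.e.\ by $\delta_0$, using $\|h\|_{L_\infty}\le\delta_0$ and that $s\mapsto(1-s)^\alpha$ is Lipschitz near $0$). This yields, with a self-improving Gehring/higher-integrability bump replacing $L^2$ by $L^{2+\sigma}$ or $L^q$ as needed,
\[
\Big(\fint_{D_{\rho/2}^+(x_0)}|\nabla u-(\nabla u)_{\omega,\rho}^{*}|^2\,d\omega\Big)^{1/2} \le N\varepsilon\Big(\fint_{D_{2\rho}^+(x_0)}|\nabla u|^q\,d\omega\Big)^{1/q} + N\Big(\fint_{D_{2\rho}^+(x_0)}|\F|^q\,d\omega\Big)^{1/q},
\]
where $\varepsilon = \varepsilon(\delta_0)\to0$ as $\delta_0\to0$ and $(\nabla u)^*_{\omega,\rho}$ is the constant vector $\nabla v$-related approximation; the boundary case and the interior case (where one uses instead the interior Lipschitz estimate and $\omega\sim$ constant) are handled by the same covering split as in the proof of Lemma~\ref{lemmaLipschitz}.

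From this oscillation decay I would run the level-set argument of \cite{CP}. Define $g=\mathcal{M}_\omega(|\nabla u|^2)$ (restricted to $D_1^+$, with $|\nabla u|^2$ and $|\F|^2$ extended by zero outside $D_2^+$), and show the ``density'' statement: there are constants $N_1$ and, for each $\varepsilon>0$, a $\delta_0(\varepsilon)$ such that for a dyadic cylinder $Q$ with $Q\cap\{g>N_1\lambda\}\neq\emptyset$ and $Q\cap\{\mathcal{M}_\omega(|\F|^2)\le \varepsilon^2\lambda\}\neq\emptyset$, one has $\omega(Q\cap\{g>N_1\lambda\})\le \varepsilon\,\omega(Q)$. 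This is the standard consequence of the comparison estimate together with the weak-$(1,1)$ bound for $\mathcal{M}_\omega$. Then the measure-theoretic lemma of Caffarelli--Peral (valid for the doubling measure $\omega$ on this cylindrical dyadic structure) upgrades it to
\[
\omega(\{g>N_1^k\lambda_0\}\cap D_1^+) \le (N\varepsilon)^k\,\omega(\{g>\lambda_0\}) + \sum_{j=1}^{k}(N\varepsilon)^{k-j+1}\,\omega(\{\mathcal{M}_\omega(|\F|^2)>\varepsilon^2 N_1^{j-1}\lambda_0\}),
\]
and summing $\lambda^{p/2}$ against these level sets (after choosing $\varepsilon$ so that $(N\varepsilon)$ raised to the geometric series converges, which fixes $\delta_0$) gives $\|\mathcal{M}_\omega(|\nabla u|^2)\|_{L_{p/2}(D_1^+,\omega)}\lesssim \|\nabla u\|_{L_2(D_2^+,\omega)}^2 + \|\mathcal{M}_\omega(|\F|^2)\|_{L_{p/2}(D_2^+,\omega)}$, hence the claimed $L_p(\omega)$ bound on $\nabla u$ by the maximal function estimate on $L_{p/2}(\omega)$ (here $p\ge2$ is exactly what makes $p/2\ge1$). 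I expect the main obstacle to be the comparison step in the boundary layer: one must simultaneously deal with the frozen-coefficient error, the nonflatness absorbed into $h$ via the weight factor $(1-h)^\alpha$, and the fact that $x_n^\alpha$ is not $A_2$ when $\alpha\ge1$, so the weighted Poincaré/Sobolev inequalities and the higher-integrability bump must be justified by hand (using the $1$D Hardy inequality in $x_n$ and the doubling-but-not-$A_2$ structure) rather than quoted from the Muckenhoupt theory — and making the Lipschitz estimate of Lemma~\ref{lemmaLipschitz} interface cleanly with the cylindrical domains $D_\rho^+$ requires a careful geometric comparison between balls $B_\rho^+$ and cylinders $D_\rho^+$.
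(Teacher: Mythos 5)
Your plan is essentially the paper's: a perturbation/comparison estimate anchored on the Lipschitz bound of Lemma~\ref{lemmaLipschitz} for the frozen homogeneous problem, fed into the Caffarelli--Peral level-set argument. The one structural difference worth noting is that the paper (Proposition~\ref{level-propos}, via Lemmas~\ref{step-1} and~\ref{step-2}) does the perturbation in two stages --- first replacing $\omega$ by $x_n^\alpha$ while keeping the variable coefficient $\A(x,\cdot)$, obtaining an intermediate weak solution $v$ of a \emph{homogeneous} equation, and only then freezing the $x'$-dependence --- precisely so that the reverse H\"older inequality used to exploit $\hat\Theta_{\rho,x_0}(\A)\le\delta_0$ (via the three-factor H\"older split with exponents $2(2+\lambda)/\lambda$, $2+\lambda$, $2$) is applied to $v$, where it is cleanest. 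You compress both perturbations into one step, so your Gehring self-improvement must be carried out for $u$ itself with the $\F$-inhomogeneity present; this works, but one then needs $\lambda$ small enough that $2+\lambda\le p$ (harmless, since $p=2$ is trivial by the energy estimate). Also, your displayed oscillation-decay estimate should end with $L^2$-averages on the right after the Gehring step is consumed --- leaving an $L^q$ average with $q>2$ on the right would make the level-set iteration circular. The two obstacles you flag (the weight not being $A_2$ for $\alpha\ge1$, and matching balls $B_\rho^+$ with cylinders $D_\rho^+$) are real but routine; the paper handles them by citing the weighted Sobolev/Poincar\'e inequalities for doubling measures from \cite{DP20} and a standard covering argument.
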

The remaining part of the section is  devoted to the proof of Theorem \ref{flat-Lp-thrm}. We prove Theorem \ref{flat-Lp-thrm} using the level set argument introduced in \cite{CP}. For our implementation, the following result is the main ingredient.

\begin{proposition} \label{level-propos} For every $\kappa \in (0,1)$ and $\alpha \in (-1, \infty)$, there exist sufficiently small numbers $\delta_0' = \delta_0'(\kappa, n, \alpha) >0$ and $\lambda_0 = \lambda_0(\kappa, n, \alpha)>0$ such that the following assertions hold. Suppose that $\A$ and $ \omega$ satisfy \eqref{A-local}, \eqref{omega.cond}, and $\|h\|_{L_\infty(D_2^+)} \leq \delta_0'$. Then, for any weak solution $u \in W^1_2(D_2^+, \omega)$ of \eqref{D2.eqn}, and for any $\rho \in (0,1/2), x_0 \in \overline{D_1^+}$, there exists $w \in W^1_2(D_\rho^+(x_0), \omega)$ satisfying
\begin{equation} \label{u-w-est}
\begin{split}
& \left(\fint_{D_{\rho}^+(x_0)} |\nabla u (x)- \nabla w(x)|^2 \,d\omega(x) \right)^{1/2}\\
 & \leq N \Big(\hat{\Theta}^{\frac{\lambda}{2(2+\lambda)}}_{\rho, x_0}(\A) + \|h\|_{L_\infty(D_2^+)}\Big)\left(\fint_{D_{2\rho}^+(x_0)} |\nabla u(x)|^2 \,d\omega(x) \right)^{1/2}  \\
 & \qquad + N \left(\fint_{D_{2\rho}^+(x_0)}  |\F(x)|^2 \,d\omega(x) \right)^{1/2}
\end{split}
\end{equation}
and
\begin{equation} \label{w-Linfty}
\begin{split}
 \|\nabla w\|_{L_\infty(D^{+}_{\rho/2}(x_0))} & \leq N \left(\fint_{D_{2\rho}^+(x_0)} |\nabla u(x)|^2 \,d\omega(x) \right)^{1/2}  \\
 & \qquad + N \left(\fint_{D_{2\rho}^+(x_0)} |\F(x)|^2 \,d\omega(x) \right)^{1/2}
\end{split}
\end{equation}
for any $\lambda \in (0, \lambda_0)$, and $N = N(\kappa, \alpha, n, \lambda)>0$.
\end{proposition}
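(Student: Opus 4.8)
\medskip
\noindent\textbf{Outline of the argument.}
The plan is to prove Proposition~\ref{level-propos} by a two-step freezing (comparison) argument whose output $w$ is the solution of a \emph{homogeneous} equation with the model weight $x_n^\alpha$ and with coefficients frozen in the tangential variable $x'$, so that the Lipschitz bound of Lemma~\ref{lemmaLipschitz} applies directly to $w$. Fix $\rho\in(0,1/2)$ and $x_0\in\overline{D_1^+}$, so that $D_{2\rho}^+(x_0)\subset D_2^+$. Since $\|h\|_{L_\infty(D_2^+)}\le\delta_0'<1$, the weight $\omega$ is comparable to $d\mu_0:=x_n^\alpha\,dx$ on $D_2^+$, with constants depending only on $\alpha$ and $\delta_0'$; throughout we use this to interchange $d\omega$ and $d\mu_0$ inside averages, up to harmless constants.

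\smallskip
\noindent\emph{Step 1: removing the right-hand side.}
First we would introduce $v\in W^1_2(D_{2\rho}^+(x_0),\omega)$, the weak solution of $\div[\omega\,\A(x,\nabla v)]=0$ in $D_{2\rho}^+(x_0)$ with the homogeneous conormal condition on $\partial D_{2\rho}^+(x_0)\cap\{x_n=0\}$ and $v=u$ on the remaining portion of $\partial D_{2\rho}^+(x_0)$ (existence and uniqueness following from \eqref{A-local} by the monotone operator method in the weighted space). Since the conormal condition is encoded in the weak formulation tested against \emph{all} $\varphi\in C_0^\infty(D_{2\rho}(x_0))$ (nothing being imposed on $\{x_n=0\}$), the function $\varphi=u-v$ is admissible; plugging it into the difference of the weak formulations for $u$ and $v$ and using the monotonicity in \eqref{A-local} with Young's inequality gives $\int_{D_{2\rho}^+(x_0)}|\nabla u-\nabla v|^2\,d\omega\le N\int_{D_{2\rho}^+(x_0)}|\F|^2\,d\omega$ together with $\int_{D_{2\rho}^+(x_0)}|\nabla v|^2\,d\omega\le N\int_{D_{2\rho}^+(x_0)}(|\nabla u|^2+|\F|^2)\,d\omega$.

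\smallskip
\noindent\emph{Step 2: freezing the coefficients and flattening the weight.}
Next, letting $\bar\A(x_n,\xi):=\bar\A_{B_\rho'(x_0')}(x_n,\xi)$ be the tangential average from Definition~\ref{PBMO-def} (averaging in $x'$ preserves the structure conditions, so $\bar\A$ satisfies \eqref{elip-interior}), we would take $w\in W^1_2(D_\rho^+(x_0),\mu_0)$ to be the weak solution of $\div[x_n^\alpha\,\bar\A(x_n,\nabla w)]=0$ in $D_\rho^+(x_0)$ with the homogeneous conormal condition on the flat part of $\partial D_\rho^+(x_0)$ and $w=v$ on the rest of $\partial D_\rho^+(x_0)$; testing with $w-v$ gives $\int_{D_\rho^+(x_0)}|\nabla w|^2\,d\mu_0\le N\int_{D_\rho^+(x_0)}|\nabla v|^2\,d\mu_0$. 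Testing the difference of the equations for $v$ and $w$ against $\varphi=v-w$, then adding and subtracting $x_n^\alpha\A(x,\nabla v)$, produces an identity whose right-hand side splits into an \emph{oscillation term} with integrand $x_n^\alpha\wei{\bar\A(x_n,\nabla v)-\A(x,\nabla v),\,\nabla(v-w)}$ and a \emph{weight-error term} with integrand $x_n^\alpha\big(1-(1-h)^\alpha\big)\wei{\A(x,\nabla v),\,\nabla(v-w)}$. The weight-error term is controlled, after Young's inequality, by $N\|h\|_{L_\infty}^2\int_{D_\rho^+(x_0)}|\nabla v|^2\,d\mu_0$, using $|1-(1-h)^\alpha|\le N(\alpha,\delta_0')\|h\|_{L_\infty}$ and the growth bound in \eqref{elip-interior}. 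For the oscillation term we would write $K(x)=\sup_{\xi\ne0}|\A(x,\xi)-\bar\A(x_n,\xi)|/|\xi|$, so that $K\le2\kappa^{-1}$ and $\fint_{D_\rho^+(x_0)}K\,d\mu_0$ is comparable to $\hat\Theta_{\rho,x_0}(\A)$; after Young's inequality it remains to bound $\int_{D_\rho^+(x_0)}x_n^\alpha K^2|\nabla v|^2$, which we do by H\"older's inequality with exponents $\tfrac{2+\lambda}{2}$ and $\tfrac{2+\lambda}{\lambda}$, the elementary bound $K^{2(2+\lambda)/\lambda}\le NK$, and a weighted reverse H\"older (higher integrability) estimate $\big(\fint_{D_\rho^+(x_0)}|\nabla v|^{2+\lambda}\,d\mu_0\big)^{1/(2+\lambda)}\le N\big(\fint_{D_{2\rho}^+(x_0)}|\nabla v|^2\,d\mu_0\big)^{1/2}$ valid for all small $\lambda$; this is what produces the gain exponent $\tfrac{\lambda}{2(2+\lambda)}$ on $\hat\Theta$. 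Combining these bounds with the monotonicity in \eqref{A-local}, the triangle inequality, Step~1, and $\omega\sim x_n^\alpha$ would give \eqref{u-w-est}.

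\smallskip
\noindent\emph{Step 3, and the main obstacle.}
Since the equation for $w$ has exactly the form \eqref{constantCoeff}--\eqref{constantCoeffBC}, Lemma~\ref{lemmaLipschitz} bounds $\|\nabla w\|_{L_\infty}$ on half-balls; covering $\overline{D^+_{\rho/2}(x_0)}$ by finitely many half-balls of radius comparable to $\rho$ whose fourfold dilations lie in $D_\rho^+(x_0)$, and using the doubling property of $d\mu_0$ to compare the averages, we would get $\|\nabla w\|_{L_\infty(D_{\rho/2}^+(x_0))}\le N\big(\fint_{D_\rho^+(x_0)}|\nabla w|^2\,d\mu_0\big)^{1/2}$, and chaining the energy bounds of Steps~1--2 with $\omega\sim x_n^\alpha$ would yield \eqref{w-Linfty}. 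Finally $\delta_0'$ is taken so small that $(1-\delta_0')^{\pm\alpha}\le2$ (its only role, besides making $\|h\|_{L_\infty}$ an admissible smallness factor), and $\lambda_0$ is the exponent coming from the higher-integrability step. The hard part will be establishing that weighted reverse H\"older inequality, which is the one place where $\alpha\ge1$, i.e.\ $\mu_0$ lying outside the $A_2$ class, requires genuine care: it has to be obtained from the Caccioppoli inequality (as in Lemma~\ref{lem1}), a weighted Sobolev--Poincar\'e inequality for $x_n^\alpha$ valid for \emph{every} $\alpha>-1$ — available because $x_n^\alpha$ is a one-dimensional power weight and $d\mu_0$ is doubling, even though it fails $A_2$ for $\alpha\ge1$ — and Gehring's lemma on the space of homogeneous type $(\R^n_+,|\cdot|,d\mu_0)$, with the conormal boundary case handled by the boundary forms of the Caccioppoli and Poincar\'e inequalities exactly as in Section~\ref{Lip-est}.
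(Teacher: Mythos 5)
Your argument is correct and follows the paper's strategy — a two-step freezing argument that produces $w$ solving a homogeneous model-weight equation with coefficients frozen in $x'$, so Lemma~\ref{lemmaLipschitz} applies — but you distribute the error terms slightly differently between the two comparisons. In the paper (Lemmas~\ref{step-1} and~\ref{step-2}), the intermediate $v$ already solves $\div[x_n^\alpha\A(x,\nabla v)]=0$ with the \emph{flattened} weight $x_n^\alpha$, so that the $u$-versus-$v$ comparison in Step~1 absorbs both the right-hand side $\F$ and the $\|h\|_{L_\infty}$ weight-error at once (giving exactly \eqref{u-v}); the $v$-versus-$w$ comparison in Step~2 then has identical weights on both sides and isolates the pure coefficient oscillation. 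In your version, $v$ solves the homogeneous problem with the original weight $\omega$, so Step~1 handles only $\F$, and your Step~2 comparison (add and subtract $x_n^\alpha\A(x,\nabla v)$) carries both the $(1-(1-h)^\alpha)$ weight-error term and the oscillation term. Both arrangements produce the same estimates: the $\|h\|$-contribution is controlled by $|1-(1-h)^\alpha|\lesssim\|h\|_{L_\infty}$ and the growth bound exactly as in the paper, and your Young/H\"older manipulation on the oscillation term yields the same exponent $\tfrac{\lambda}{2(2+\lambda)}$ as the paper's three-exponent H\"older. The paper's arrangement is marginally tidier (its $v$ already has the model weight, so the reverse H\"older is stated for $x_n^\alpha$ and the $v$--$w$ step has no weight mismatch to track), while yours has the cleaner perturbation structure of removing the data first; since $\omega\sim x_n^\alpha$ this is immaterial. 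Your final paragraph on where the reverse H\"older inequality comes from — Caccioppoli, weighted Sobolev--Poincar\'e valid for all $\alpha>-1$, Gehring on the doubling metric-measure space — is exactly the justification the paper cites and leaves to the reader.
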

Below, we give the proof of Proposition \ref{level-propos}, which is divided into two steps. In the first step, for each $x_0 = (x_0', x_n^0) \in \overline{D_1^+}$ and $\rho \in (0,1/2)$, we perturb the equation \eqref{D2.eqn} and compare the solution $u \in W^1_2(D_2^+, \omega)$ with the solution $v \in W^1_2(D_{2\rho}^+(x_0), \omega)$ of the following boundary value problem
\begin{equation} \label{v-eqn}
\left\{
\begin{array}{cccl}
\div[x_n^\alpha \A(x, \nabla v(x))] & = & 0 & \quad \text{in} \quad D_{2\rho}^+(x_0), \\
v &= & u & \quad \text{on} \quad \partial D_{2\rho}^+ (x_0) \setminus \Hat\Gamma_{2\rho}(x_0), \\
x_n^\alpha A_n(x, \nabla v(x)) & = & 0 & \quad \text{on}  \quad \Hat \Gamma_{2\rho}(x_0) \quad \text{if} \quad  \Hat\Gamma_{2\rho}(x_0) \not= \emptyset.
\end{array} \right.
\end{equation}
We  note that $\omega (x) \sim x_n^\alpha$ when $\|h\|_{L_\infty} < 1$ and therefore
\[
W^1_2(D_{2\rho}^+(x_0), x_n^\alpha) = W^1_2(D_{2\rho}^+(x_0), \omega).
\]
We say that $v \in W^1_2(D_{2\rho}^+(x_0), \omega)$ is a weak solution of \eqref{v-eqn} if $v- u =0$ on $ D_{2\rho}^+ \setminus \Hat\Gamma_{2\rho}(x_0)$ in the sense of trace and
\[
\int_{D_{2\rho}^+(x_0)} \wei{\A(x, \nabla v(x)), \nabla \varphi(x)} x_{n}^\alpha dx =0, \quad \forall \ \varphi \in C_0^\infty(D_{2\rho}(x_0)).
\]
\begin{lemma} \label{step-1} There exist sufficiently small positive numbers $\lambda_0 = \lambda_0(\kappa, n, \alpha)$ and  $\delta_0' = \delta_0'(\kappa, n, \alpha)$ such that the following assertions hold. For each $u \in W^1_2(D_{2}^+, \omega)$, $x_0 \in \overline{D_1^+}$ and $\rho \in (0,1/2)$, there exists a unique weak solution $v \in W^1_2(D_{2\rho}^+(x_0), \omega)$ to \eqref{v-eqn} satisfying
\begin{equation} \label{Reverse-Holder}
\left(\fint_{D_\rho^+(x_0)} |\nabla v(x)|^{2+\lambda}\, d\omega\right)^{1/(2+\lambda)} \leq N(\kappa, n, \alpha, \lambda)\left(\fint_{D_{2\rho}^+(x_0)} |\nabla v(x)|^{2}\, d\omega\right)^{1/2}
\end{equation}
for any $\lambda \in (0, \lambda_0)$. Moreover, if $\|h\|_{L_\infty(D_2^+)} \leq \delta_0'$, then
\begin{equation} \label{u-v}
\begin{split}
& \fint_{D_{2\rho}^+(x_0)} |\nabla u(x) - \nabla v(x)|^2\, d\omega \\
& \leq  N(\kappa, \alpha) \|h\|_{L_\infty(D_2^+)}^2 \fint_{D_{2\rho}^+(x_0)} |\nabla u(x)|^2\, d\omega + N(\kappa) \fint_{D_{2\rho}^+(x_0)} |\F(x)|^2\, d \omega.
\end{split}
\end{equation}
\end{lemma}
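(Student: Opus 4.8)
The plan is to split the statement into three parts: existence/uniqueness of $v$, the reverse Hölder inequality \eqref{Reverse-Holder}, and the comparison estimate \eqref{u-v}. For existence and uniqueness of the weak solution $v\in W^1_2(D_{2\rho}^+(x_0),\omega)$ to \eqref{v-eqn}, I would argue by the standard monotone operator method: the map $w\mapsto \A(x,\nabla w)$ is strictly monotone, coercive, and hemicontinuous on the affine space $u+W^1_{2,0}(D_{2\rho}^+(x_0),x_n^\alpha)$ (the subspace vanishing on $\partial D_{2\rho}^+(x_0)\setminus\hat\Gamma_{2\rho}(x_0)$) by \eqref{A-local}, while the weight $x_n^\alpha$ with $\alpha\in(-1,\infty)$ lies in the Muckenhoupt class $A_2$ locally, so the relevant weighted Sobolev space is a reflexive Banach space in which Poincaré's inequality holds; Browder--Minty then gives a unique solution. (Here I use $\omega\sim x_n^\alpha$ on $D_{2\rho}^+(x_0)$ when $\|h\|_{L_\infty}<1$, as noted right before the lemma, so it does not matter whether we work with $\omega$ or $x_n^\alpha$.)

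For the reverse Hölder inequality \eqref{Reverse-Holder}, the key point is that $v$ solves the \emph{homogeneous} equation $\div(x_n^\alpha\A(x,\nabla v))=0$ with conormal data on the flat piece $\hat\Gamma_{2\rho}(x_0)$. I would first establish a weighted Caccioppoli inequality for $v$ — exactly as in Lemma~\ref{lem1}, testing with $(v-c)\zeta^2$ and using \eqref{A-local} together with Young's inequality — to get
\[
\fint_{D_r^+(y_0)}|\nabla v|^2\,d\omega \le \frac{N}{r^2}\fint_{D_{2r}^+(y_0)}|v-c|^2\,d\omega
\]
for balls $D_{2r}^+(y_0)\subset D_{2\rho}^+(x_0)$ (both interior balls and balls centered on $\hat\Gamma_{2\rho}(x_0)$, choosing $c$ to be the weighted average, respectively the weighted average over the half-ball). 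Combining this with the weighted Sobolev--Poincaré inequality (valid since $x_n^\alpha\in A_2$ locally, so a Sobolev pair $(2,2^*_\ast)$ with $2^*_\ast<2$ is available) yields a weak reverse Hölder / self-improving inequality of the form $\big(\fint|\nabla v|^2 d\omega\big)^{1/2}\le N\big(\fint|\nabla v|^{2q}d\omega\big)^{1/(2q)}$ for some $q\in(0,1)$; Gehring's lemma (in the weighted/doubling-measure form) then upgrades this to \eqref{Reverse-Holder} for all $\lambda\in(0,\lambda_0)$ with $\lambda_0=\lambda_0(\kappa,n,\alpha)$.

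For the comparison estimate \eqref{u-v}, I would subtract the weak formulations of \eqref{D2.eqn} and \eqref{v-eqn} and test with $\varphi=u-v$ (legitimate since $u-v$ vanishes on $\partial D_{2\rho}^+(x_0)\setminus\hat\Gamma_{2\rho}(x_0)$ in the trace sense, and the conormal terms on $\hat\Gamma_{2\rho}(x_0)$ match). This gives
\[
\int_{D_{2\rho}^+(x_0)}\wei{\A(x,\nabla u)-\A(x,\nabla v),\nabla u-\nabla v}x_n^\alpha\,dx
= \int_{D_{2\rho}^+(x_0)}(\omega-x_n^\alpha)\wei{\A(x,\nabla u),\nabla(u-v)}\,dx
+\int_{D_{2\rho}^+(x_0)}\omega\wei{\F,\nabla(u-v)}\,dx.
\]
The left side is bounded below by $\kappa\int|\nabla u-\nabla v|^2 x_n^\alpha\,dx$ via \eqref{A-local}; on the right, $|\omega-x_n^\alpha|=x_n^\alpha|1-(1-h)^\alpha|\le N(\alpha)\|h\|_{L_\infty}x_n^\alpha$ (using $\|h\|_{L_\infty}\le\delta_0'$ small, so $(1-h)^\alpha$ stays near $1$), and $|\A(x,\nabla u)|\le\kappa^{-1}|\nabla u|$ from \eqref{A-local}; applying Cauchy--Schwarz and Young's inequality to absorb the $\nabla(u-v)$ factors into the left side gives \eqref{u-v}. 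The main obstacle is the reverse Hölder step: one must check carefully that the weighted Sobolev--Poincaré inequality with the precise exponents needed for Gehring's lemma holds on half-balls with the conormal condition, uniformly in the center $y_0\in\overline{D_{\rho}^+(x_0)}$ and in $\rho$; this is where the structure of $x_n^\alpha$ as a one-dimensional $A_2$-type weight (as opposed to a general $A_2$ weight) is used, and where references such as \cite{DP20} and \cite{vita} are the natural precedents.
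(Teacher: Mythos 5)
Your overall approach matches the paper's: Minty--Browder for existence/uniqueness, Caccioppoli plus weighted Sobolev--Poincar\'e plus Gehring for the reverse H\"older inequality, and testing with $u-v$ for the comparison estimate. Your version of the comparison estimate is in fact a small simplification of the paper's: by keeping the monotonicity term weighted by $x_n^\alpha$ rather than $\omega$, the error term carries $|\nabla u|$ directly, so you avoid the extra step the paper performs of replacing $|\nabla v|$ by $|\nabla u|+|\nabla u-\nabla v|$ and absorbing; both variants are correct.

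There is, however, one incorrect justification that you should fix. You assert that ``the weight $x_n^\alpha$ with $\alpha\in(-1,\infty)$ lies in the Muckenhoupt class $A_2$ locally,'' and you invoke this both for the reflexivity/Poincar\'e structure in the existence step and for the weighted Sobolev--Poincar\'e inequality in the Gehring step. This is false when $\alpha\ge 1$: the weight $x_n^\alpha$ belongs to $A_2$ if and only if $\alpha\in(-1,1)$. The entire point of the paper (its title says ``beyond the $A_2$ class'') is that $\alpha$ can be $\ge 1$, so you cannot lean on generic $A_2$-weighted theory. What actually saves the argument --- and what the paper cites --- is that $x_n^\alpha$ is a doubling weight for all $\alpha>-1$ and, because it depends on a single coordinate, it admits a weighted Sobolev--Poincar\'e inequality by an explicit one-dimensional argument (see \cite[Lemma 3.1 and Remark 3.2 (ii)]{DP20}, or \cite{hajlasz}), together with the corresponding doubling-measure version of Gehring's lemma. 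You gesture at this distinction in your last sentence (``the structure of $x_n^\alpha$ as a one-dimensional $A_2$-type weight''), but the earlier explicit $A_2$ claim should be removed and replaced by a reference to these specific facts; otherwise the proof of \eqref{Reverse-Holder} has no valid basis when $\alpha\ge 1$. Likewise, in the existence step the reflexivity of the relevant weighted Sobolev space should be obtained directly from uniform convexity, as the paper does, not from $A_2$ theory.
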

\begin{proof}  To prove the existence of solution  $v \in W^1_2(D_{\rho}^+(x_0), \omega)$ to the equation \eqref{v-eqn}, let us denote
\[
\hat{\A}(x, \xi) = (\hat{A}_1(x, \xi), \hat{A}_2(x, \xi),\ldots, \hat{A}_n(x, \xi) ) = \A(x, \nabla u(x) + \xi) - \A(x, \nabla u(x)),
\]
and
\[
\G =(G_1, G_2,\ldots, G_n) = - \A(x, \nabla u(x)), \quad x \in D_2^+, \xi \in \mathbb{R}^n.
\]
We consider the equation
\begin{equation} \label{tilde-v-eqn}
\left\{
\begin{array}{ccll}
\div[x_n^\alpha \hat{\A}(x, \nabla \tilde{v}(x))] & = &\div[x^\alpha_n\G] & \  \text{in} \  D_{2\rho}^+(x_0), \\
\tilde{v} &= & 0 & \  \text{on} \  \partial D_{2\rho}^+ (x_0) \setminus \Hat\Gamma_{2\rho}(x_0), \\
x_n^\alpha \hat{A}_n(x, \nabla \tilde{v}(x)) & = & x^\alpha_n G_n & \  \text{on}  \  \Hat \Gamma_{2\rho}(x_0) \  \text{if} \   \Hat\Gamma_{2\rho}(x_0) \not= \emptyset.
\end{array} \right.
\end{equation}
Note that due to \eqref{A-local} and $u \in W^1_2(D_2^+, \omega)$, we have $\G \in L_2(D_2^+, \omega)$. Moreover, it is simple to check that $\hat{\A}$ satisfies the ellipticity and growth conditions as in \eqref{A-local}.
Also, let
\[
\mathbb{E} = \big\{ g \in W^1_2(D_{\rho}^+(x_0), \omega): g \vert_{\partial D_{2\rho}^+ \setminus \Hat\Gamma_{2\rho}(x_0)} =0 \, \text{ in the sense of trace}\big\}.
\]
It can be checked that $\mathbb{E}$ is uniformly convex, and thus it  is reflexive. Therefore, it follows from the Minty-Browder theorem (see \cite{Br76}) that there is a unique weak solution $\tilde{v} \in \mathbb{E}$ to \eqref{tilde-v-eqn}. From this, we obtain the existence and uniqueness of a weak solution $v := \tilde{v} + u \in W^1_2(D_{\rho}^+(x_0), \omega)$ to the equation \eqref{v-eqn}.

\smallskip
Next, observe that the estimate \eqref{Reverse-Holder} is well known as the reverse H\"{o}lder's inequality. The proof of \eqref{Reverse-Holder} follows from the standard method using Caccioppoli inequalities,  the weighted Sobolev inequality (see, for instance, \cite[Lemma 3.1 and Remark 3.2 (ii)]{DP20}, the doubling property of $\omega$ as $\alpha \in (-1, \infty)$, and Gerhing's lemma. As those are standard techniques, we skip the details.

\smallskip
It remains to prove the estimate \eqref{u-v}. We observe that by testing the equations \eqref{D2.eqn} and \eqref{v-eqn} with $u-v$, we obtain
\[
\int_{D_{2\rho}^+(x_0)} \omega(x)\wei{\A(x, \nabla u), \nabla u - \nabla v} \,dx = \int_{D_{2\rho}^+(x_0)} \omega(x)\wei{\F, \nabla u - \nabla v} \,dx
\]
and
\[
\int_{D_{2\rho}^+(x_0)} \omega(x)\wei{\A(x, \nabla v), \nabla u - \nabla v} \,dx =  \int_{D_{2\rho}^+(x_0)} (\omega(x) - x_n^\alpha)\wei{\A(x, \nabla v), \nabla u - \nabla v} \,dx.
\]
Therefore, it follows from \eqref{A-local} that
\[
\begin{split}
& \kappa \int_{D_{2\rho}^+(x_0)} |\nabla u(x) - \nabla v(x)|^2 \omega(x) \,dx \\
&\leq \int_{D_{2\rho}^+(x_0)} \omega(x) \wei{\A(x, \nabla u) - \A(x, \nabla v), \nabla u - \nabla v} \,dx \\
& \leq \int_{D_{2\rho}^+(x_0)} \omega(x) |\F(x)| |\nabla u(x) - \nabla v(x)| \,dx \\
& \qquad + \kappa^{-1}\sup_{x\in D_2^+}|1 - x_n^\alpha/\omega(x)|\int_{D_{2\rho}^+(x_0)} \omega(x)  |\nabla v(x)||\nabla u(x) - \nabla v(x)| \,dx \\
& \leq \frac{\kappa}{2} \int_{D_{2\rho}^+(x_0)} |\nabla u(x) - \nabla v(x)|^2 \omega(x) \,dx + N(\kappa) \int_{D_{2\rho}^+(x_0)}  |\F(x)|^2 \omega(x) \,dx \\
& \qquad + N(\kappa) \sup_{x\in D_2^+}|1 - x_n^\alpha/\omega(x)|^2 \int_{D_{2\rho}^+(x_0)} |\nabla v(x)|^2 \omega(x) \,dx.
\end{split}
\]
This implies that
\[
\begin{split}
\int_{D_{2\rho}^+(x_0)} |\nabla u(x) - \nabla v(x)|^2 \omega(x) \,dx & \leq  N(\kappa) \int_{D_{2\rho}^+(x_0)}  |\F(x)|^2 \omega(x) \,dx \\
& \quad + N(\kappa, \alpha)\|h\|_{L_\infty(D_2^+)}^2 \int_{D_{2\rho}^+(x_0)} |\nabla v(x)|^2 \omega(x) \,dx.
\end{split}
\]
Then, for $\delta_0' \in (0,1)$ sufficiently small such that
$$
N(\kappa, \alpha)\|h\|_{L_\infty(D_2^+)}^2\le N(\kappa, \alpha)(\delta_0')^2 \leq 1/4,
$$
we have
\[
\begin{split}
& \int_{D_{2\rho}^+(x_0)} |\nabla u (x)- \nabla v(x)|^2 \omega(x) \,dx  \leq  N(\kappa) \int_{D_{2\rho}^+(x_0)}  |\F(x)|^2 \omega(x) \,dx \\
 & \quad + \frac{1}{2}\int_{D_{2\rho}^+(x_0)} |\nabla u(x) - \nabla v(x)|^2 \omega(x)\,dx + N(\kappa, \alpha) \|h\|_{L_\infty(D_2^+)}^2 \int_{D_{2\rho}^+(x_0)} |\nabla u(x)|^2 \omega(x)\,dx.
\end{split}
\]
From this, the assertion \eqref{u-v} follows, and the proof of the lemma is completed.
\end{proof}
Next, recall that
\[
\bar{\A}_{B_\rho'(x_0')}(x_n, \xi) = \fint_{B_\rho'(x_0')} \A(x, \xi) \,dx'
\]
and we write
\[
 \bar{\A}_{B_\rho'(x_0')}(x_n, \xi)= (\bar{A}_{B_\rho'(x_0'),1}(x_n, \xi),  \bar{A}_{B_\rho'(x_0'),2}(x_n, \xi), \ldots,  \bar{A}_{B_\rho'(x_0'),n}(x_n, \xi)).
\]
In the next step of the perturbation, we consider the  equation with frozen coefficients
\begin{equation} \label{w-eqn}
\left\{
\begin{array}{cccl}
\div[x_n^\alpha \bar{\A}_{B_\rho'(x')}(x_n, \nabla w(x))] & = & 0 & \quad \text{in} \quad D_\rho^+(x_0), \\
w & = & v & \quad \text{on}\quad \partial D_{\rho}^+(x_0) \setminus \Hat\Gamma_{\rho}(x_0), \\
x_n^\alpha \bar{A}_{B_\rho'(x'),n}(x_n, \nabla w(x)) & = & 0 & \quad \text{on} \quad \Hat\Gamma_{\rho}(x_0) \quad \text{if}\quad \Hat\Gamma_{\rho}(x_0) \not=\emptyset,
\end{array}
\right.
\end{equation}
where $v$ is defined in Lemma \ref{step-1}.  The definition of a weak solution $w \in W^1_2( D_{\rho}^+(x_0), \omega)$ to \eqref{w-eqn} can be formulated exactly the same as that of \eqref{v-eqn}. In this step, we obtain the following approximation estimate.
\begin{lemma} \label{step-2}  Let $\delta_0'$ and $\lambda_0$ be as in Lemma \ref{step-1} and assume that $\|h\|_{L_\infty(D_2^+)} \leq \delta_0'$. Then, for each $u \in W^1_2(D_2^+, \omega)$ and $\rho \in (0,1), x_0 \in \overline{D_1^+}$, there exists a weak solution $w \in W^1_2(D_{\rho}^+(x_0), \omega)$ to   \eqref{w-eqn} satisfying
\[
\begin{split}
& \left(\fint_{D_{\rho}^+(x_0)}  |\nabla w(x) - \nabla v(x)|^2\, d \omega(x) \right)^{1/2} \\
& \leq N \hat{\Theta}^{\frac{\lambda}{2(2+\lambda)}}_{\rho, x_0}(\A) \left[ \left(\fint_{D_{2\rho}^+(x_0)}  |\nabla u(x)|^{2}\, d \omega(x) \right)^{\frac{1}{2}} + \left(\fint_{D_{2\rho}^+(x_0)}  |\F(x)|^{2}\, d \omega(x) \right)^{\frac{1}{2}} \right],
\end{split}
\]
where $N = N(\kappa, n, \alpha, \lambda)>0$ and $\lambda \in (0, \lambda_0)$.
\end{lemma}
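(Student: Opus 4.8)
The plan is to compare $w$ to $v$ by exploiting that the frozen coefficient field $\bar{\A}_{B_\rho'(x_0')}(x_n,\xi)$ differs from $\A(x,\xi)$ only through the $x'$-mean oscillation, which is controlled by $\hat{\Theta}_{\rho,x_0}(\A)$. First I would establish the existence and uniqueness of the weak solution $w\in W^1_2(D_\rho^+(x_0),\omega)$ to \eqref{w-eqn}: since $\omega(x)\sim x_n^\alpha$ one may again invoke the Minty–Browder theorem exactly as in Lemma \ref{step-1}, writing $w = \tilde w + v$ where $\tilde w$ solves a zero-boundary problem with source $-\bar{\A}_{B_\rho'(x_0')}(x_n,\nabla v)$ and the same boundary condition; the monotonicity and growth in \eqref{A-local} (which $\bar{\A}_{B_\rho'(x_0')}$ inherits by averaging) make the associated operator coercive and monotone on the reflexive space $\mathbb{E}$.

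The core estimate comes from testing the two equations against $w - v$. Testing \eqref{w-eqn} with $\varphi = w - v$ gives $\int x_n^\alpha\langle \bar{\A}_{B_\rho'(x_0')}(x_n,\nabla w),\nabla w - \nabla v\rangle\,dx = 0$, while testing \eqref{v-eqn} with the same $\varphi$ gives $\int x_n^\alpha\langle \A(x,\nabla v),\nabla w - \nabla v\rangle\,dx = 0$. Subtracting and using the ellipticity in \eqref{A-local},
\[
\kappa \int_{D_\rho^+(x_0)} |\nabla w - \nabla v|^2 x_n^\alpha\,dx \le \int_{D_\rho^+(x_0)} \big|\A(x,\nabla v) - \bar{\A}_{B_\rho'(x_0')}(x_n,\nabla v)\big|\,|\nabla w - \nabla v|\, x_n^\alpha\,dx,
\]
so by Young's inequality
\[
\int_{D_\rho^+(x_0)} |\nabla w - \nabla v|^2\,d\omega \le N(\kappa)\int_{D_\rho^+(x_0)} \Big(\sup_{\xi\neq 0}\frac{|\A(x,\xi) - \bar{\A}_{B_\rho'(x_0')}(x_n,\xi)|}{|\xi|}\Big)^2 |\nabla v|^2\,d\omega,
\]
using $\omega\sim x_n^\alpha$ to pass between the two measures. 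Now I would split the last integral via Hölder's inequality with exponents $\frac{2+\lambda}{\lambda}$ and $\frac{2+\lambda}{2}$: the oscillation factor raised to power $\frac{2(2+\lambda)}{\lambda}$ is bounded (since the supremum is pointwise $\le 2\kappa^{-1}$ by \eqref{A-local}) times its $L^1$-average, which is essentially $\hat\Theta_{\rho,x_0}(\A)$, yielding a factor $\hat\Theta_{\rho,x_0}(\A)^{\lambda/(2(2+\lambda))}$; the $|\nabla v|^2$ factor raised to power $\frac{2+\lambda}{2}$ is controlled by the reverse Hölder inequality \eqref{Reverse-Holder} by $\big(\fint_{D_{2\rho}^+(x_0)}|\nabla v|^2\,d\omega\big)^{1/2}$ after taking square roots. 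Finally I would replace $\fint_{D_{2\rho}^+(x_0)}|\nabla v|^2\,d\omega$ by $\fint_{D_{2\rho}^+(x_0)}|\nabla u|^2\,d\omega + \fint_{D_{2\rho}^+(x_0)}|\F|^2\,d\omega$ using the comparison estimate \eqref{u-v} from Lemma \ref{step-1} together with $\|h\|_{L_\infty}\le\delta_0'\le 1$, absorbing $\|h\|_{L_\infty}$ into the constant. Assembling these gives the claimed bound.

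The main obstacle is the careful bookkeeping of exponents in the Hölder/reverse-Hölder step — one must check that the range $\lambda\in(0,\lambda_0)$ produced by Gehring's lemma in Lemma \ref{step-1} is exactly what makes $|\nabla v|^{2+\lambda}$ integrable, and that the oscillation term genuinely produces a positive power of $\hat\Theta_{\rho,x_0}(\A)$ rather than just $\hat\Theta_{\rho,x_0}(\A)$ itself (the gain being possible only because the oscillation integrand is uniformly bounded, so a higher power is dominated by the first power). A secondary technical point is that the frozen coefficient $\bar{\A}_{B_\rho'(x_0')}(x_n,\xi)$ depends on $\xi$ in a merely Lipschitz way, so one cannot differentiate the $w$-equation directly; but for Lemma \ref{step-2} only the energy comparison is needed, and the interior/boundary Lipschitz regularity of $w$ — which requires $\bar{\A}_{B_\rho'(x_0')}$ to fit the framework of Lemma \ref{lemmaLipschitz} — is deferred to the assembly of Proposition \ref{level-propos}. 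I would also remark that all the cutoff and Caccioppoli arguments used implicitly (via \eqref{Reverse-Holder}) are uniform in $\rho$ and $x_0$ by the scaling and translation invariance of $x_n^\alpha$, so the constant $N$ indeed depends only on $\kappa,n,\alpha,\lambda$.
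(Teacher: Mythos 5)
Your argument is correct and follows essentially the same route as the paper: Minty--Browder for existence, test both equations with $w-v$, use ellipticity plus the oscillation of $\A$ about the frozen coefficient, then split via H\"older (exploiting the pointwise bound $2\kappa^{-1}$ on the oscillation integrand to convert the high power into a positive power of $\hat\Theta$), close with the reverse H\"older inequality \eqref{Reverse-Holder}, and finally pass from $\nabla v$ to $\nabla u$ and $\F$ via \eqref{u-v}. The only cosmetic difference is that you apply Young's inequality first and then a two-factor H\"older, while the paper applies a three-factor H\"older directly with exponents $\frac{2(2+\lambda)}{\lambda},\,2+\lambda,\,2$; the exponent arithmetic is identical either way.
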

\begin{proof} As in the proof of Lemma \ref{step-1}, the existence of a  weak solution $$w~\in~W^1_2(D_{\rho}^+(x_0), \omega)$$ to  \eqref{w-eqn} follows by the Minty-Browder theorem. As $w-v =0$ in the sense of trace on $\partial D_{\rho}^+(x_0) \setminus \Hat \Gamma_{\rho}(x_0)$, we can  use $w-v$ as a test function for \eqref{w-eqn} to obtain
\[
\int_{D_{\rho}^+(x_0)} x_n^\alpha \wei{\bar{\A}_{B_\rho'(x')}(x_n, \nabla w),  \nabla w-  \nabla v} \,dx =0.
\]
On the other hand,  as $\Hat\Gamma_{\rho}(x_0) \subset \Hat\Gamma_{2\rho}(x_0)$, we can also use $w-v$ as a test function for \eqref{v-eqn} and obtain
\[
\int_{D_{\rho}^+(x_0)} x_n^\alpha \wei{\A(x, \nabla v),  \nabla w-  \nabla v} \,dx =0.
\]
Then, it follows that
\[
\begin{split}
& \int_{D_{\rho}^+(x_0)} x_n^\alpha \wei{\A_{B_\rho'(x')}(x_n, \nabla w) - \bar{\A}_{B_\rho'(x')}(x_n, \nabla v),  \nabla w-  \nabla v} \,dx \\
& = \int_{D_{\rho}^+(x_0)} x_n^\alpha \wei{\A(x, \nabla v) -  \bar{\A}_{B_\rho'(x')}(x_n, \nabla v),  \nabla w-  \nabla v} \,dx.
\end{split}
\]
Next, by using the conditions in \eqref{A-local}, H\"{o}lder's inequality, and the fact that $\omega(x) \sim x_n^\alpha$, we obtain
\[
\begin{split}
& \fint_{D_{\rho}^+(x_0)}  |\nabla w(x) - \nabla v(x)|^2\, d \omega(x) \\
& \leq N\fint_{D_{\rho}^+(x_0)}  \frac{|\A(x, \nabla v) -  \bar{\A}_{B_\rho'(x')}(x_n, \nabla v)|}{|\nabla v(x)|}|\nabla v(x) |\nabla w(x) -\nabla v(x)|\, d \omega (x) \\
& \leq  N \left( \fint_{D_{\rho}^+(x_0)} \left[\sup_{\xi \in \mathbb{R}^n \setminus \{0\}} \frac{|\A(x, \xi) -  \bar{\A}_{B_\rho'(x')}(x_n, \xi)|}{|\xi|} \right]^{\frac{2(2+\lambda)}{\lambda}} \,d\omega(x) \right)^{\frac{\lambda}{2(2+\lambda)}} \\
& \qquad \times\left(\fint_{D_{\rho}^+(x_0)}  |\nabla v(x)|^{2+\lambda}\, d \omega(x) \right)^{\frac{1}{2+\lambda}} \left(\fint_{D_{\rho}^+(x_0)}  |\nabla w(x) -\nabla v(x)|^{2}\, d \omega(x) \right)^{\frac{1}{2}}  \\
& \leq  N \hat{\Theta}^{\frac{\lambda}{2(2+\lambda)}}_{\rho, x_0}(\A) \left(\fint_{D_{\rho}^+(x_0)}  |\nabla v(x)|^{2+\lambda}\, d \omega(x) \right)^{\frac{1}{2+\lambda}} \left(\fint_{D_{\rho}^+(x_0)}  |\nabla w(x) -\nabla v(x)|^{2}\, d \omega(x) \right)^{\frac{1}{2}},
\end{split}
\]
where in the last step we used the growth condition in \eqref{A-local} and $N = N(\kappa, n, \alpha, \lambda)$ is a positive constant for $\lambda \in (0, \lambda_0)$. It then follows that
\[
\left(\fint_{D_{\rho}^+(x_0)}  |\nabla w(x) - \nabla v(x)|^2\, d \omega(x) \right)^{1/2} \leq N \hat{\Theta}^{\frac{\lambda}{2(2+\lambda)}}_{\rho, x_0}(\A) \left(\fint_{D_{\rho}^+(x_0)}  |\nabla v(x)|^{2+\lambda}\, d \omega(x) \right)^{\frac{1}{2+\lambda}}.
\]
From this and \eqref{Reverse-Holder}, we infer that
\[
\left(\fint_{D_{\rho}^+(x_0)}  |\nabla w(x) - \nabla v(x)|^2\, d \omega(x) \right)^{1/2} \leq N \hat{\Theta}^{\frac{\lambda}{2(2+\lambda)}}_{\rho, x_0}(\A) \left(\fint_{D_{2\rho}^+(x_0)}  |\nabla v(x)|^{2}\, d \omega(x) \right)^{\frac{1}{2}}.
\]
Now, by using the triangle inequality and \eqref{u-v}, we obtain
\[
\begin{split}
& \left(\fint_{D_{\rho}^+(x_0)}  |\nabla w(x) - \nabla v(x)|^2\, d \omega(x) \right)^{1/2} \\
& \leq N \hat{\Theta}^{\frac{\lambda}{2(2+\lambda)}}_{\rho, x_0}(\A) \left[ \left(\fint_{D_{2\rho}^+(x_0)}  |\nabla u(x)|^{2}\, d \omega(x) \right)^{\frac{1}{2}} + \left(\fint_{D_{2\rho}^+(x_0)}  |\F(x)|^{2}\, d \omega(x) \right)^{\frac{1}{2}} \right].
\end{split}
\]
The lemma is proved.
\end{proof}
We are now ready to prove Proposition \ref{level-propos}.
\begin{proof}[Proof of Proposition \ref{level-propos}] Let $\delta_0'>0$ and $\lambda_0$ be as in Lemma \ref{step-1}. Then, we see that \eqref{u-w-est} follows easily from Lemma \ref{step-1}, Lemma \ref{step-2}, and the triangle inequality. {Similarly,} {by the triangle inequality}, we also obtain
\[
\begin{split}
& \left(\fint_{D_{\rho}^+(x_0)} |\nabla w(x)|^2 \,d\omega(x) \right)^{1/2} \\
& \leq N \left(\fint_{D_{2\rho}^+(x_0)} |\nabla u(x)|^2 \,d\omega(x) \right)^{1/2} + N \left(\fint_{D_{2\rho}^+(x_0)} |\F(x)|^2 \,d\omega(x) \right)^{1/2}
\end{split}
\]
for $N = N(\kappa, \alpha, n) >0$. As $\|h\|_{L_\infty(D_2^+)} < 1$, we see that $\omega \sim x_n^\alpha$. Therefore, by applying the results on the Lipschitz estimates (Lemma \ref{lemmaLipschitz}) for the homogeneous equation \eqref{w-eqn}, we obtain \eqref{w-Linfty}.
\end{proof}

\begin{proof}[Proof of Theorem \ref{flat-Lp-thrm}] From Proposition \ref{level-propos}, Theorem \ref{flat-Lp-thrm} follows from the  level set  argument introduced in \cite{CP}. {See also \cite{DK11}.} Since it is standard now, we skip the details.
\end{proof}

\section{Global \texorpdfstring{$L_p$}{Lp} regularity estimates} \label{boundary-global-section}

This section is devoted to the proof of Theorem \ref{g-theorem}. As usual, we divide the proof into two main steps: establishing interior estimates and boundary ones.
\subsection{Interior estimates} For every $\rho {\in (0, \text{diam}(\Omega)/2)}$, we write
\[
\Omega^{\rho} =\Big \{ x \in \Omega: \text{dist}(x, \partial \Omega) > \rho\Big\}.
\]
For $p \in (1, \infty), \rho >0$, we say that $u \in W^1_p(\Omega^\rho, \mu)$ is a weak solution of
\begin{equation} \label{inter-eqn}
\text{div}[\mu(x) \A(x, \nabla u(x))] = \text{div}[\mu(x) \F(x)] \quad \text{in} \quad \Omega^\rho
\end{equation}
if
\[
\int_{\Omega^\rho} \wei{\A(x,\nabla u(x)), \nabla \varphi(x)} \mu(x) \,dx = \int_{\Omega^\rho} \wei{\F(x), \nabla \varphi(x)} \mu(x)\,dx \quad \forall \ \varphi \  \in \ C_0^\infty(\Omega^\rho).
\]
The following theorem on interior estimates for \eqref{inter-eqn} is the main result of this subsection.

\begin{theorem} \label{inter.est-thm} Let $\kappa \in (0,1)$, $\alpha \in (-1, \infty)$, $p \in [2,\infty)$, and $\rho \in (0, r_0)$. There exists $\delta_1 = \delta_1(\kappa, \alpha, p, n ,\rho) >0$ sufficiently small such that the following assertions hold. Suppose that $\A$ and $\mu$ satisfy \eqref{mu.def}, \eqref{structure-A}, and
\begin{equation} \label{the-inter}
\Theta_{r, x}(\A, \mu) \leq \delta_1, \quad \forall \ x \in \overline{\Omega^{2\rho}}, \quad \forall r \in (0, R_0)
\end{equation}
for some $R_0 \in (0,1)$, where $\Theta_{r, x}(\A, \mu) $ is defined in \eqref{Theta.def}. Then, for any weak solution $u \in W^1_2(\Omega^\rho, \mu)$ to \eqref{inter-eqn}, if $\F \in L_p(\Omega^\rho, \mu)$, we have $\nabla u \in L_p(\Omega^{2\rho}, \mu)$ and
\[
\|\nabla u\|_{L_p(\Omega^{2\rho}, \mu)} \leq N \|\nabla u\|_{L_2(\Omega^\rho, \mu)} + N\|\F\|_{L_p(\Omega^\rho, \mu)},
\]
where $N >0$ is a constant depending on $\rho,  p, n, \kappa, R_0, \alpha,{\textup{diam}(\Omega^{\rho})}$, and the modulus continuity of $\mu$ on $\overline{\Omega^{r_0}}$.
\end{theorem}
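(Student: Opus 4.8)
The plan is to reduce this to a purely interior, uniformly elliptic Calder\'on--Zygmund estimate. The point is that on the closed set $\overline{\Omega^\rho}=\{x\in\overline\Omega:\dist(x,\partial\Omega)\ge\rho\}$ the weight $\mu$ is continuous with $0<c(\rho)\le\mu\le C(\rho)<\infty$ by \eqref{mu.def}, and it is uniformly continuous there with some modulus $\tau=\tau_\rho$. Hence, multiplying \eqref{inter-eqn} through, one may rewrite it on any subdomain of $\Omega^\rho$ as $\div[\widehat\A(x,\nabla u)]=\div[\widehat\F]$ with $\widehat\A(x,\xi):=\mu(x)\A(x,\xi)$ and $\widehat\F:=\mu\F$; then $\widehat\A$ is uniformly elliptic with constants $\widetilde\kappa=\widetilde\kappa(\kappa,\rho)$ coming from \eqref{structure-A} and the two-sided bound on $\mu$, the norms $\|\cdot\|_{L_p(\Omega^{2\rho},\mu)}$ and $\|\cdot\|_{L_p(\Omega^{2\rho})}$ are comparable, and the mean oscillation of $\widehat\A$ over a ball $B_r$ is bounded by $\tfrac{C(\rho)}{c(\rho)}\Theta_{r,x}(\A,\mu)$ plus the oscillation of $\mu$ over $B_r$ (using $|\widehat\A(x,\xi)|\le C(\rho)\kappa^{-1}|\xi|$). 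Since $\overline{\Omega^{2\rho}}$ is a compact subset of the open set $\Omega^\rho$ in which $u$ solves the equation, what is asked for is the classical interior $W^1_p$ estimate for quasilinear equations with small--BMO coefficients; I would either invoke this directly or re-derive it following Section \ref{local-Lp-sec} verbatim but without the boundary, as follows.

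Fix $x_0\in\overline{\Omega^{2\rho}}$ and $r\in(0,r_1)$, $r_1=r_1(\rho)$ small enough that $B_{2r}(x_0)\subset\Omega^\rho$. As in Lemma \ref{step-1}, compare $u$ with the solution $v$ of $\div[\mu\,\A(x,\nabla v)]=0$ in $B_{2r}(x_0)$, $v=u$ on $\partial B_{2r}(x_0)$: testing both equations with $u-v$ and using \eqref{structure-A} gives $\fint_{B_{2r}(x_0)}|\nabla u-\nabla v|^2\,d\mu\le N\fint_{B_{2r}(x_0)}|\F|^2\,d\mu$ — note that $\mu$ being common to both sides, there is no analogue of the $h$-error of Lemma \ref{step-1} — together with the reverse H\"older inequality $\big(\fint_{B_r(x_0)}|\nabla v|^{2+\lambda}\,d\mu\big)^{1/(2+\lambda)}\le N\big(\fint_{B_{2r}(x_0)}|\nabla v|^2\,d\mu\big)^{1/2}$ for some $\lambda=\lambda(\kappa,n,\alpha,\rho)>0$ (Caccioppoli $+$ weighted Sobolev $+$ doubling of $\mu$ on $\overline{\Omega^\rho}$ $+$ Gehring). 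Next, as in Lemma \ref{step-2}, freeze the coefficients: let $w$ solve $\div[\bar\A_{B_r(x_0)}(\nabla w)]=0$ in $B_r(x_0)$, $w=v$ on $\partial B_r(x_0)$, where $\bar\A_{B_r(x_0)}(\xi)=\fint_{B_r(x_0)}\A(x,\xi)\,dx$ (the constant factor $\mu(x_0)$ drops out). Testing the equations for $v$ and $w$ with $w-v$ and splitting $\mu(x)\A(x,\xi)-\mu(x_0)\bar\A_{B_r(x_0)}(\xi)=\mu(x)(\A(x,\xi)-\bar\A_{B_r(x_0)}(\xi))+(\mu(x)-\mu(x_0))\bar\A_{B_r(x_0)}(\xi)$, the first term is controlled by a power of $\Theta_{r,x_0}(\A,\mu)$ (via \eqref{structure-A}, H\"older, and the reverse H\"older bound) and the second by $\tau(2r)$ (since $|\bar\A_{B_r(x_0)}(\xi)|\le\kappa^{-1}|\xi|$), so that
\begin{align*}
\Big(\fint_{B_r(x_0)}|\nabla v-\nabla w|^2\,d\mu\Big)^{1/2}
&\le N\big(\Theta_{r,x_0}(\A,\mu)^{\frac{\lambda}{2(2+\lambda)}}+\tau(2r)\big)\\
&\quad\times\Big[\Big(\fint_{B_{2r}(x_0)}|\nabla u|^2\,d\mu\Big)^{1/2}+\Big(\fint_{B_{2r}(x_0)}|\F|^2\,d\mu\Big)^{1/2}\Big].
\end{align*}
Finally, $\div[\bar\A_{B_r(x_0)}(\nabla w)]=0$ has $x$-independent coefficients satisfying \eqref{structure-A}, so the interior Lipschitz bound $\|\nabla w\|_{L_\infty(B_{r/2}(x_0))}\le N\big(\fint_{B_r(x_0)}|\nabla w|^2\,d\mu\big)^{1/2}$ holds — this is the interior case (Case 1) in the proof of Lemma \ref{lemmaLipschitz} (with $\mu\sim\text{const}$), or a classical fact. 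Combining the three steps with the triangle inequality produces the interior analogue of Proposition \ref{level-propos}.

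The passage from this comparison estimate to the $L_p$ bound is then the level-set argument of \cite{CP} run exactly as in the proof of Theorem \ref{flat-Lp-thrm}, the case $p=2$ being the energy estimate. The parameters are fixed by first shrinking $R_0$ and $r_1$, using the uniform continuity of $\mu$ on $\overline{\Omega^\rho}$, so that $\tau(2r)$ lies below the threshold required by the iteration for $r<R_0$, and then choosing $\delta_1=\delta_1(\kappa,\alpha,p,n,\rho)$ small enough that, by \eqref{the-inter}, $\Theta_{r,x}(\A,\mu)^{\lambda/(2(2+\lambda))}$ is likewise small for all $x\in\overline{\Omega^{2\rho}}$ and $r<R_0$. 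I expect the main obstacle to be largely bookkeeping rather than any new idea: the genuinely new ingredient compared with Section \ref{local-Lp-sec} is the $\mu$-freezing error $\tau(2r)$ in the second comparison step, and the delicate part is tracking how the constants $\lambda$, $c(\rho)$, $C(\rho)$, $\delta_1$, and $N$ depend on $\rho$ and on the modulus of continuity of $\mu$; once that is organized the argument is strictly simpler than the flat-boundary case, since there is no boundary of the solving domain inside $\overline{\Omega^{2\rho}}$ and $\mu$ is nondegenerate there.
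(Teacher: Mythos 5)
Your proposal is correct and follows essentially the same route as the paper: rewrite the equation as $\div[\tilde\A(x,\nabla u)]=\div[\tilde\F]$ with $\tilde\A=\mu\A$ uniformly elliptic on $\Omega^\rho$, observe that the (unweighted) mean oscillation of $\tilde\A$ is controlled by $\Theta_{r,x}(\A,\mu)$ plus the oscillation of $\mu$ (handled by uniform continuity of $\mu$ on $\overline{\Omega^\rho}$), and then invoke the interior small-BMO Calder\'on--Zygmund theory, finishing with a covering of $\overline{\Omega^{2\rho}}$. The paper simply cites known results (\cite{BCGOP}, \cite{TP}) for that last step rather than re-deriving the two-step comparison and level-set argument you sketch, but the reduction and the control of the BMO seminorm are identical.
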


\begin{proof} Let us define
\[
\tilde{\A}(x, \xi) = \mu(x) \A(x, \xi) \quad \text{and} \quad \tilde{\F}(x) = \mu(x) \F(x).
\]
We observe that the vector field $\tilde{\A}$ is uniformly elliptic and bounded in $\Omega^\rho$ with the ellipticity and boundedness constants depend on $\kappa, \rho$, and $\alpha$. For any $x_0 \in \overline{\Omega^{2\rho}}$ and for $r \in (0, \min\{R_0, \rho\})$, we write
\[
\Theta_{r,x_0}(\tilde{\A}) = \frac{1}{|B_r(x_0)|} \int_{B_r(x_0)} \sup_{\xi \in \mathbb{R}^n\setminus\{0\}} \frac{|\tilde{\A}(x,\xi) - \tilde{\A}_{r,x_0}(\xi)|}{|\xi|} \,dx,
\]
where
$$
\tilde{\A}_{r,x_0}(\xi)=\fint_{B_r(x_0)}\tilde{\A}(x,\xi)\,dx.
$$
As in $\Omega^\rho$, $\mu = \mathcal{O}(1)$, we see that $W^1_2(\Omega^\rho, \mu) = W^1_2(\Omega^\rho)$. Then, $u \in W^1_2(\Omega^\rho)$ is a weak solution of
\[
\div[\tilde{\A}(x, \nabla u(x))] = \div[\tilde{\F}(x)] \quad \text{in} \quad \Omega^{\rho}.
\]
Therefore, by the standard $L_p$-regularity theory for uniformly elliptic equations {(see \cite[Theorem 5]{BCGOP} and \cite[Theorem 1.1] {TP} for instance)}, there exists a sufficiently small $\epsilon = \epsilon(\kappa, n, p) >0$ such that if
\begin{equation} \label{Theta-tA}
\Theta_{r, x_0} (\tilde{\A}) \leq \epsilon, \quad \forall \ x_0 \in \overline{\Omega^{2\rho}}, \quad \forall \ r \in (0, \rho_1)
\end{equation}
 for some $\rho_1 \in (0, \min\{R_0, \rho\})$, then
\[
 \int_{B_r(x_0)} |\nabla u(x)|^p \,dx  \leq N r^{n(1-p/2)}\left(\int_{B_{2r}(x_0)} |\nabla u(x)|^2 \,dx\right)^{p/2} +  N\int_{B_{2r}(x_0)}|\tilde{\F}(x)|^p \,dx
\]
for all $x_0 \in \overline{\Omega^{2\rho}}$ and for $r >0$ such that $B_{2r}(x_0) \subset \Omega^{\rho}$, where
for $N = N(p, n, \kappa, \alpha) >0$. From this, and by covering $\overline{\Omega^{2\rho}}$ by a finite number of balls, we obtain
\[
\int_{\Omega^{2\rho}} |\nabla u(x)|^p \,dx \leq N \left(\int_{\Omega^{\rho}} |\nabla u(x)|^2\,dx\right)^{p/2} + N\int_{\Omega^{\rho}}|\tilde{\F}(x)|^p \,dx
\]
for $N = N(p, n, \kappa, \alpha, \text{diam}(\Omega), \rho_1) >0$. From this and as $\mu(x) =\mathcal{O}(1)$, we see that if \eqref{Theta-tA} holds, then
\[
\|\nabla u\|_{L_p(\Omega^{2\rho}, \mu)} \leq N \|\nabla u\|_{L_2(\Omega^\rho, \mu)} + N\|\F\|_{L_p(\Omega^\rho, \mu)}.
\]

\smallskip
It remains to prove that under {the condition} \eqref{the-inter} and with suitable choices of $\delta_1$ and $\rho_1>0$,  \eqref{Theta-tA} holds. For any $\xi \in \mathbb{R}^n\setminus \{0\}$ and $x_0 \in \overline{\Omega^{2\rho}}$, recall the definition of the weighted average $ \A_{B_r(x_0)}(\xi)$ given in \eqref{A-weight-everage} and also let
\[
\mu_{r,x_0} = \frac{1}{|B_r(x_0)|}\int_{B_r(x_0)} \mu(x)\, dx.
\]
Then, we have
\[
\begin{split}
& \frac{|\tilde{\A}(x, \xi) - \mu_{r,x_0} \A_{B_r(x_0)}(\xi) |}{|\xi|}  \\
& \leq  \frac{\mu(x)| \A(x, \xi) - \A_{B_r(x_0)}(\xi)|}{|\xi|} + \frac{|\A_{B_r(x_0)}(\xi)|  |\mu(x) -\mu_{r,x_0} | }{|\xi|} \\
& \leq \frac{\mu(x)| \A(x, \xi) - \A_{B_r(x_0)}(\xi)|}{|\xi|} + N(\kappa)  |\mu(x) -\mu_{r,x_0} |, \quad\forall\, x \in \Omega.
\end{split}
\]
As a result,
\begin{align} \notag
& \fint_{B_r(x_0)} \sup_{\xi\in \mathbb{R}^n\setminus\{0\}}\frac{|\tilde{\A}(x,\xi) - \tilde{\A}_{r,x_0}(\xi)|}{|\xi|} \,dx \\ \notag
&\leq 2\fint_{B_r(x_0)} \sup_{\xi\in \mathbb{R}^n\setminus\{0\}}\frac{|\tilde{\A}(x, \xi) - \mu_{r,x_0} \A_{B_r(x_0)}(\xi) |}{|\xi|} \,dx \\ \notag
& \leq 2 \fint_{B_r(x_0)} \mu(x) \sup_{\xi\in \mathbb{R}^n\setminus\{0\}}\frac{| \A(x, \xi) - \A_{B_r(x_0)}(\xi)|}{|\xi|} \,dx + N(\kappa) \sup_{x, y \in B_r(x_0)} |\mu(x) -\mu(y)| \\ \notag
& \leq N(\alpha, n ,\rho) \fint_{B_r(x_0)} \frac{| \A(x, \xi) - \A_{B_r(x_0)}(\xi)|}{|\xi|} d \mu(x) + N(\kappa) \sup_{x, y \in B_r(x_0)} |\mu(x) -\mu(y)| \\  \label{las.0419}
& \leq N(\alpha, n ,\rho) \Theta_{r,x_0}(\A, \mu) +  N(\kappa) \sup_{x, y \in B_r(x_0)} |\mu(x) -\mu(y)| .
\end{align}
Next, we choose $\delta_1 = \delta_1(\kappa, \alpha, n, p ,\rho)>0$ sufficiently small such that
\[
[N(\alpha, n ,\rho) + N(\kappa)]\delta_1 \leq \epsilon.
\]
As $\mu$ is uniformly continuous on $\overline{\Omega^{\rho}}$, we can find $\rho_1 \in (0, \min\{R_0, \rho\})$  sufficiently small such that
\[|\mu(x) - \mu(y)| \leq \delta_1, \quad \forall x, y \in \overline{\Omega^{\rho}},\,\, |x -y| \leq \rho_1. \]
Then with this choice of $\rho_1$, \eqref{Theta-tA} follows from \eqref{the-inter} and \eqref{las.0419}. The proof of the theorem is completed.
\end{proof}
\subsection{Local boundary estimates}
Recall that for $R>0$, $B_R'(x')$ denotes the ball in $\mathbb{R}^{n-1}$ of radius $R$ centered at $x' \in \mathbb{R}^{n-1}$, and also $B_R' = B_R'(0)$.  Let $r_0 \in (0,1)$ be as in \eqref{mu.def}. As $\Omega$ is $(\delta, \rho_0)$-Lipschitz,  it follows that for $R \in (0, \min\{\rho_0, r_0\}/2]$, and $x_0 \in \partial \Omega$, by Definition \ref{Lip-domain} and with a rotation and translation, we may assume that $x_0=0$ and
\[
\begin{split}
& \mathcal{C}_{2R} :=\big\{ x = (x', x_n) \in B_{2R}' \times \mathbb{R}:  \gamma(x') < x_n < \gamma(x') + 2R\big \} \subset \Omega \quad \text{and} \\
& \Gamma_{2R}:= \Big \{ (x', \gamma(x')): x' \in B_{2R}' \Big\} \subset \partial \Omega,
\end{split}
\]
where $\gamma: \overline{B_{2R}'} \rightarrow \mathbb{R}$ is a Lipschitz function which satisfies
\begin{equation*}
\gamma(0) = 0, \quad \nabla \gamma (0) =0, \quad \text{and} \quad \|\nabla \gamma\|_{L_\infty(B_{2R}')} \leq \delta.
\end{equation*}
Recall also that
\[
\mu(x) = \text{dist}(x, \partial \Omega)^\alpha, \quad x \in \mathcal{C}_{2R}.
\]
In this subsection, we study the equation \eqref{main-eqn} locally near $0 \in \partial \Omega$:
\begin{equation} \label{bdr.eqn}
\left\{
\begin{array}{cccl}
\div[\mu(x) \A(x, \nabla u(x))] & = & \div(\mu(x) \F ) & \quad \text{in} \quad \mathcal{C}_{2R}, \\
\mu(x) \A(x, \nabla u(x)) \cdot \vec{\nu}  & = & 0 & \quad \text{on} \quad \Gamma_{2R}.
\end{array} \right.
\end{equation}
For $p \in (1, \infty)$, we say  $u \in W^1_p(\mathcal{C}_{2R}, \mu)$ is a weak solution of \eqref{bdr.eqn} if
\begin{equation} \label{local-weak-form}
\int_{\mathcal{C}_{2R}} \mu(x) \wei{\A(x,\nabla u(x)), \nabla \varphi(x)} \,dx = \int_{\mathcal{C}_{2R}} \mu(x) \wei{\F(x), \nabla \varphi(x)} \,dx
\end{equation}
for all $\varphi \in C^\infty(\overline{\mathcal{C}}_{2R})$ vanishing on the neighborhood of $\partial \mathcal{C}_{2R} \setminus \Gamma_{2R}$.

This subsection is devoted to the proof of the following result on local regularity estimates of weak solutions to \eqref{bdr.eqn}.

\begin{theorem} \label{local-Lp-thm} Let $R \in (0, \min\{\rho_0, r_0\}/2)$. For each $p \in [2, \infty)$, there exists a sufficiently small constant $\delta_2 =\delta_2(\kappa, p, n, \alpha)>0$ such that if \eqref{structure-A} holds, $\Omega$ is $(\delta, \rho_0)$-Lipschitz with $\delta \in (0, \delta_2)$, and
\begin{equation} \label{ep1}  \Theta_{\rho, x_0}(\A, \mu) < \delta_2 \quad \forall x_0 \in \overline{\mathcal{C}_R}, \ \forall \ \rho \in (0, R_0)
\end{equation}
for some $R_0\in(0,1)$, then for any weak solution $u \in W^1_2(\mathcal{C}_{2R}, \mu)$ of \eqref{bdr.eqn} with $\F \in L_p(\mathcal{C}_{2R}, \mu)^n$, we have $\nabla u \in L_p(\mathcal{C}_R, \mu)$ and
\begin{equation} \label{Du-est-brx}
\begin{split}
&\left(\int_{\mathcal{C}_{R}} |\nabla u(x)|^p \mu(x) \,dx\right)^{1/p} \\
&  \leq N \mu(\mathcal{C}_{2R})^{\frac{1}{p} -\frac{1}{2}}\left(\int_{\mathcal{C}_{2R}} |\nabla u(x)|^2 \mu(x) \,dx\right)^{1/2}+ N\left(\int_{\mathcal{C}_{2R}} |\F(x)|^p \mu(x) \,dx\right)^{1/p},
\end{split}
\end{equation}
where $N = N(\kappa, \alpha, n, p, R_0) >0$.
\end{theorem}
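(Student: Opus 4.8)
The plan is to reduce Theorem~\ref{local-Lp-thm} to the flat-domain estimate Theorem~\ref{flat-Lp-thrm} by flattening the graph $\Gamma_{2R}$, checking that the hypotheses of Theorem~\ref{flat-Lp-thrm} survive the flattening, and then rescaling to unit size. After a rotation and translation take $x_0=0$, so that near $0$ the boundary is the graph $x_n=\gamma(x')$ with $\gamma(0)=0$, $\nabla\gamma(0)=0$, $\|\nabla\gamma\|_{L_\infty}\le\delta$, and the region above the graph up to height $\rho_0$ lies in $\Omega$. Let $\Psi(y)=(y',\,y_n+\gamma(y'))$, which maps $D_{2R}^+$ bijectively onto $\mathcal C_{2R}$ with $\det D\Psi\equiv 1$ and with $D\Psi$, $(D\Psi)^{-1}$ perturbations of the identity of size $N\delta$. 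Putting $v=u\circ\Psi$ and performing the substitution $x=\Psi(y)$ in the weak formulation \eqref{local-weak-form}, one finds that $v\in W^1_2(D_{2R}^+,\hat\omega)$ is a weak solution (in the sense of Section~\ref{local-Lp-sec}) of an equation of the form \eqref{D2.eqn} on $D_{2R}^+$, with
\[
\hat\A(y,\xi)=(D\Psi(y))^{-1}\A\big(\Psi(y),(D\Psi(y))^{-T}\xi\big),\quad \hat\F(y)=(D\Psi(y))^{-1}\F(\Psi(y)),\quad \hat\omega(y)=\mu(\Psi(y)).
\]
Since $\|D\Psi\|,\|(D\Psi)^{-1}\|\le 1+N\delta$, the field $\hat\A$ satisfies \eqref{A-local} on $D_{2R}^+$ with an ellipticity constant $\kappa'=\kappa'(\kappa,\delta)$ that stays bounded away from $0$ for $\delta$ small, and $\det D\Psi\equiv1$ gives, for $q\in\{2,p\}$, the comparisons $\int_{D_{2R}^+}|\nabla v|^q\hat\omega\,dy\sim\int_{\mathcal C_{2R}}|\nabla u|^q\mu\,dx$ and $\int_{D_{2R}^+}|\hat\F|^q\hat\omega\,dy\le N\int_{\mathcal C_{2R}}|\F|^q\mu\,dx$.

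Next, the weight $\hat\omega$ must be admissible in the sense of \eqref{omega.cond}. Projecting vertically onto $\Gamma$ gives $\dist(x,\partial\Omega)\le x_n-\gamma(x')$ for $x=(x',x_n)\in\mathcal C_{2R}$; for the reverse inequality the point is that $2R<\rho_0$, so the top of $\mathcal C_{2R}$ lies at height $\gamma(x')+2R<\gamma(x')+\rho_0$, hence in the interior of $\Omega$, which forces the nearest boundary point of any $x\in\mathcal C_{2R}$ to sit on or below $\Gamma$, and then $\|\nabla\gamma\|_{L_\infty}\le\delta$ plus an elementary minimization yields $\dist(x,\partial\Omega)\ge(x_n-\gamma(x'))/\sqrt{1+\delta^2}$. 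Hence $\hat\omega(y)=y_n^\alpha(1-h(y))^\alpha$ with $h=1-\dist(\Psi(\cdot),\partial\Omega)/y_n$ and $0\le h\le N\delta$ on $D_{2R}^+$; in particular $\hat\omega\sim y_n^\alpha$. It remains to verify $\sup_{\rho<R_0}\sup_{x\in\overline{D_1^+}}\hat\Theta_{\rho,x}(\hat\A)\le\delta_0$ (Definition~\ref{PBMO-def}) starting from hypothesis \eqref{ep1} on the full weighted oscillation $\Theta_{\rho,x_0}(\A,\mu)$ (Definition~\ref{BMO-def}): the factors $(D\Psi)^{\pm1}$ depend only on $y'$ and oscillate on any ball by at most the oscillation of $\nabla\gamma$, hence $\le2\delta$; and averaging $\A(\Psi(y),\cdot)$ over a slice $\{y_n=\text{const}\}$ moves the base point within an $x_n$-slab of thickness $\le\delta\rho$, so that, using $\hat\omega\sim y_n^\alpha$ and doubling of $y_n^\alpha$, the slice oscillation of $\hat\A$ is bounded by the $\mu$-averaged oscillation of $\A$ over a comparable full ball plus an $N\delta$ error. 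Altogether $\hat\Theta_{\rho,x}(\hat\A)\le N(\delta_2+\delta)$, so choosing $\delta_2$ small (and $\delta\le\delta_2$) makes all the smallness requirements of Theorem~\ref{flat-Lp-thrm} and Proposition~\ref{level-propos} hold.

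Finally, the dilation $z\mapsto Rz$ transforms the problem on $D_{2R}^+$ into one on $D_2^+$ for $\tilde v(z)=R^{-1}v(Rz)$, preserving the ellipticity constant, the form \eqref{omega.cond} of the weight (same $\|h\|_{L_\infty}$), and the smallness of the partial mean oscillation (now at scales $\le\min\{R_0/R,1\}$). Applying Theorem~\ref{flat-Lp-thrm} to $\tilde v$ on $D_2^+$ and undoing the dilation, the mismatched scalings of the weighted $L_p$- and $L_2$-norms (the weight carries an extra factor $R^\alpha$) produce exactly the factor $\mu(\mathcal C_{2R})^{\frac1p-\frac12}\sim R^{(n+\alpha)(\frac1p-\frac12)}$ in front of the $L_2$-term while leaving the $\F$-term with no power of $R$; undoing the flattening via the comparisons of the first paragraph (note $\Psi(D_R^+)=\mathcal C_R$) converts the estimate into \eqref{Du-est-brx}.

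The genuinely delicate step is deducing the \emph{partial} mean oscillation bound for the shear-transformed field $\hat\A$ from the hypothesis on the \emph{full} weighted mean oscillation of $\A$: one must track the $O(\delta)$ errors generated by $\Psi$ and carefully pass between full and slice averages against the weights $\hat\omega\sim y_n^\alpha$ (this is why the full, rather than the partial, oscillation is what is assumed in \eqref{ep1}). The weight-admissibility step is a short geometric lemma, but it is exactly where the assumption $2R<\rho_0$ is used, guaranteeing that the whole of $\mathcal C_{2R}$, not just $\mathcal C_R$, sits in the region where $\dist(\cdot,\partial\Omega)$ is comparable to $x_n-\gamma(x')$.
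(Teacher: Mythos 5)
Your proposal is correct and follows essentially the same route as the paper: flatten $\Gamma_{2R}$ via the shear $\Psi$, verify that the transformed weight has the admissible form $y_n^\alpha(1-h)^\alpha$ with $\|h\|_\infty\lesssim\delta$ (this is the paper's Lemma~\ref{mu-lemma}), bound the mean oscillation of $\hat\A$ by that of $\A$ plus an $O(\delta)$ term (the paper's Lemma~\ref{A-h-BMO}), and then invoke Theorem~\ref{flat-Lp-thrm} after a dilation, with $\mu(\mathcal C_{2R})^{1/p-1/2}$ emerging from the mismatch of weighted $L_p$ and $L_2$ scalings. Your extra commentary on passing from the full weighted oscillation $\Theta$ to the partial oscillation $\hat\Theta$ makes explicit a step the paper leaves implicit after Lemma~\ref{A-h-BMO}, but it is the same argument.
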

To prove Theorem \ref{local-Lp-thm}, we flatten the boundary $\Gamma_{2R}$ and then apply Theorem \ref{flat-Lp-thrm}. We begin with the following simple lemma on the properties of the weight $\mu$.

\begin{lemma}  \label{mu-lemma}  Assume that $\Omega$ is $(\delta, \rho_0)$-Lipschitz.  Then, there exist $h: C_{2R} \rightarrow \mathbb{R}$ and $N = N(\alpha)>0$ satisfying
\begin{equation*} 
\mu(x) = \big(x_n -\gamma(x')\big)^\alpha \big(1- h(x)\big)^\alpha \quad\text{and} \quad 0 \leq h(x) \leq \delta
\end{equation*}
for all $x \in \mathcal{C}_{2R}$.
\end{lemma}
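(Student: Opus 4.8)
The plan is to take $h$ to be precisely the relative defect between the true distance to $\partial\Omega$ and the vertical distance to the graph of $\gamma$, and then to verify the two bounds on $h$ by a standard comparison for Lipschitz graphs. Recall that on $\mathcal{C}_{2R}$ one has $\mu(x) = \text{dist}(x, \partial\Omega)^\alpha$, that $x_n - \gamma(x') \in (0, 2R)$ for every $x = (x', x_n) \in \mathcal{C}_{2R}$, and that $\text{dist}(x, \partial\Omega) > 0$ there because $\mathcal{C}_{2R}$ is an open subset of $\Omega$. Hence
\[
h(x) := 1 - \frac{\text{dist}(x, \partial\Omega)}{x_n - \gamma(x')}, \qquad x \in \mathcal{C}_{2R},
\]
is well defined and finite, and, since $x_n-\gamma(x')>0$, the identity $\mu(x) = \big(x_n - \gamma(x')\big)^\alpha \big(1 - h(x)\big)^\alpha$ holds by construction. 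Everything therefore reduces to the two-sided estimate
\[
(1-\delta)\,\big(x_n - \gamma(x')\big) \ \le\ \text{dist}(x, \partial\Omega) \ \le\ x_n - \gamma(x'), \qquad x \in \mathcal{C}_{2R}.
\]

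The upper bound, equivalent to $h \ge 0$, is immediate: for $x = (x', x_n) \in \mathcal{C}_{2R}$ we have $x' \in B_{2R}' \subset B_{\rho_0}'$, so $(x', \gamma(x'))$ lies on $\Gamma_{2R} \subset \partial\Omega$, whence $\text{dist}(x, \partial\Omega) \le |x - (x', \gamma(x'))| = x_n - \gamma(x')$. The lower bound, equivalent to $h \le \delta$, is the substantive point. I would first argue that, because $\text{dist}(x, \partial\Omega) \le x_n - \gamma(x') < 2R$ and $R$ is small compared with $\rho_0$, the boundary point realizing the distance must lie on the graphical piece $\{(y', \gamma(y')) : y' \in B_{\rho_0}'\}$ of $\partial\Omega$; this is the routine consequence of the $(\delta, \rho_0)$-Lipschitz structure, using that the open set $\{y' \in B_{\rho_0}',\ \gamma(y') < y_n < \gamma(y') + \rho_0\}$ sits inside $\Omega$ (and, where needed, the Lipschitz charts centered at nearby points of $\Gamma_{2R}$). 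Consequently $\text{dist}(x, \partial\Omega) = \inf_{y' \in B_{\rho_0}'} |x - (y', \gamma(y'))|$, and for every such $y'$, using $|\gamma(x') - \gamma(y')| \le \delta|x' - y'|$,
\[
|x - (y', \gamma(y'))|^2 = |x' - y'|^2 + \big(x_n - \gamma(y')\big)^2 \ \ge\ |x' - y'|^2 + \Big[\big((x_n - \gamma(x')) - \delta|x' - y'|\big)_+\Big]^2 .
\]
Minimizing the right-hand side over $|x' - y'| \ge 0$ yields the value $\big(x_n - \gamma(x')\big)^2/(1 + \delta^2)$, which is $\ge (1-\delta)^2 \big(x_n - \gamma(x')\big)^2$ for $\delta \in (0,1)$; hence $\text{dist}(x, \partial\Omega) \ge (1-\delta)\big(x_n - \gamma(x')\big)$, as claimed.

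The only genuine obstacle is the geometric reduction in the previous paragraph, namely confirming that the point of $\partial\Omega$ closest to $x$ lies on the graph of $\gamma$ over $B_{\rho_0}'$ and not on some distant part of $\partial\Omega$; this is standard for Lipschitz domains but rests on the smallness of $R$ relative to $\rho_0$ and on the graph/cone structure built into Definition \ref{Lip-domain}. The remaining ingredients are elementary. Finally, the constant $N = N(\alpha)$ appearing in the statement is the one recording the harmless two-sided bound $(1 - h(x))^\alpha \in [N^{-1}, N]$ (valid since $0 \le h(x) \le \delta$ with $\delta$ small), which is what will be invoked afterwards to identify $W^1_p(\mathcal{C}_{2R}, \mu)$ with the weighted Sobolev space for the weight $(x_n - \gamma(\cdot))^\alpha$ and to flatten the boundary before applying Theorem \ref{flat-Lp-thrm}.
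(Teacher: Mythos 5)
Your proposal is correct and follows essentially the same route as the paper: define $h(x) = 1 - \mathrm{dist}(x,\partial\Omega)/(x_n-\gamma(x'))$, get $h\ge 0$ by testing against $(x',\gamma(x'))\in\Gamma_{2R}$, and get $h\le\delta$ from the lower bound $\mathrm{dist}(x,\partial\Omega)\ge (x_n-\gamma(x'))/\sqrt{1+\delta^2}$, which the paper phrases as a cone comparison and which you instead derive by explicit minimization over the graph. You are also right to flag that the reduction of $\mathrm{dist}(x,\partial\Omega)$ to the distance to the local graph is implicitly taken for granted in the paper (it states $\mu(x)=d(x)^\alpha$ with $d$ the distance to $\Gamma_{2R}$ without comment), so your explicit acknowledgment of this point is a small improvement rather than a gap.
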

\begin{proof}
As $R \in (0, \min\{r_0, \rho_0\}/2)$, for each $x = (x', x_n) \in \mathcal{C}_{2R}$, we have
\[
\mu(x) = d^{\alpha}(x) \quad \text{and} \quad d(x) = \inf_{\xi \in \overline{B_{2R}'}} \big( |x' -\xi|^2 + |x_n - \gamma(\xi)|^2\big)^{1/2} .
\]
By the definition, it is clear that
\[
d(x) \leq x_n - \gamma(x') \quad \forall \ x = (x', x_n) \in \mathcal{C}_{2R}.
\]
On the other hand, as the cone with vertex at $(\gamma(x'), x_n)$ and slope $\delta$ stays above the graph of $\Gamma_{2R}$, we also have
\[
d(x) \geq \frac{x_n -\gamma(x')}{\sqrt{1+\delta^2}}.
\]
From the last two estimates and by taking
\[ h(x) = 1- \frac{d(x)}{x_n -\gamma(x')}, \quad x= (x', x_n) \in \mathcal{C}_{2R},
\]
we see that
\[
0 \leq h(x) \leq 1 - \frac{1}{\sqrt{1+\delta^2}} \leq  \delta.
\]
The lemma is proved.
\end{proof}
Next, we flatten the boundary $\Gamma_{2R}$ and transfer the equation \eqref{bdr.eqn} into the equation in the upper-half space as in \eqref{D2.eqn}. Let $\Phi: \mathcal C_{2R} \rightarrow D_{2R}^+: = B_{2R}' \times (-2R, 2R)$ and $\Psi : D_{2R}^+ \rightarrow \mathcal C_{2R}$ be defined by
\[
\begin{split}
& \Phi(x) = (x', x_n -\gamma(x')) \quad \forall \ x = (x', x_n) \in \mathcal C_{2R}, \\
& \Psi(y) = (y', y_n + \gamma(y'))  \quad \forall y = (y', y_n) \in D_{2R}.
\end{split}
\]
By a simple calculation, we see that
\begin{equation} \label{na-phi}
\nabla \Phi (x) =
\left(
\begin{matrix}
\I_{n-1} & 0 \\
-\partial_{x'}\gamma(x') & 1
\end{matrix}
\right)
\end{equation}
for all $x = (x', x_n) \in \mathcal C_{2R}$, and
\begin{equation} \label{na-Psi}
\nabla \Psi(y) = \left(
\begin{matrix}
\I_{n-1} & 0 \\
\partial_{y'}\gamma(y')  & 1
\end{matrix}
\right)
\end{equation}
for $y = (y', y_n) \in D_{2R}$, where $\I_{n-1}$ is the $(n-1)\times (n-1)$ identity matrix.
We note that $\text{det} (\nabla \Psi) = \text{det}(\nabla \Phi) =1$ and
\[
\Phi = \Psi^{-1}, \quad \nabla \Psi (y) = [\nabla \Phi(\Psi(y)]^{-1}, \quad \forall \ y \in D_{2R}^+.
\]
Moreover, as $\delta \in (0,1)$,
\begin{equation} \label{grad-flaten}
\begin{split}
& \|\nabla \Phi\|_{L_\infty(\mathcal C_{2R})}^2 \leq n + \|\nabla \gamma\|_{L_\infty(B_R')}^2 \leq n + \delta^2 \leq n+1\quad \text{and} \\
& \|\nabla \Psi\|_{L_\infty(D_{2R}^+)}^2 \leq n+1.
\end{split}
\end{equation}
Now, let us recall
\[ T_{2R} = B_{2R}' \times \{0\} = \partial D_{2R}^+ \cap \{y_n =0\},
\]
and denote
\begin{equation} \label{w-hatA.def}
\omega(y) = \mu (\Psi(y)) \quad \text{and} \quad  \hat{\A}(y, \xi) =  \A(\Psi(y), \xi [\nabla \Phi(\Psi(y))]) [\nabla \Phi(\Psi(y))]^*
\end{equation}
for $y \in D_{2R}^+$. We then consider the equation
\begin{equation} \label{w.eqn0406}
\left\{
\begin{array}{cccl}
\textup{div}[\omega(y) \hat{\A}(y, \nabla w(y))] & = & \textup{div}[\omega(y)\G(y)] &\quad \text{in} \quad D_{2R}^+,\\
\omega(y) \hat{A}_n (y, \nabla w(y)) & = & \omega(y)G_n(y) & \quad \text{on} \quad T_{2R}.
\end{array} \right.
\end{equation}
We note that a function $w \in W^{1}_{p}(D_{2R}^+, \omega)$ is said to be a weak solution of \eqref{w.eqn0406} if
\begin{equation} \label{wea-w}
\int_{D_{2R}^+} \omega(y) \wei{\hat{\A}(y, \nabla w(y)), \nabla \varphi(y)} dy = \int_{D_{2R}^+} \omega(y) \wei{\G(y), \nabla \varphi(y)} dy
\end{equation}
for all $\varphi \in C^\infty({\overline{D_{2R}^+}})$ which vanishes on the neighborhood of $\partial D_{2R}^+ \setminus T_{2R}$.
\begin{lemma} \label{change-cor}
Assume that $\Omega$ is $(\delta, \rho_0)$-Lipschitz. If $u \in W^1_p(\mathcal{C}_{2R}, \mu)$ is a weak solution of \eqref{bdr.eqn} for some $p \in (1, \infty)$, then for
\[ w(y)  = u(\Psi(y)), \quad y  \in D_{2R}^+,
\]
we have $w  \in W^1_p(D_{2D}^+, \omega)$ is weak a solution of \eqref{w.eqn0406} with
\[
 \G(y) = \F(\Psi(y)) [\nabla\Phi(\Psi(y))]^*, \quad y \in D_{2R}^+.
\]
\end{lemma}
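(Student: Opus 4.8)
The plan is to verify directly that the transformed function $w(y) = u(\Psi(y))$ satisfies the weak formulation \eqref{wea-w} by performing a change of variables $x = \Psi(y)$ in the weak formulation \eqref{local-weak-form} of the original equation. First I would record the chain rule: for $w(y) = u(\Psi(y))$ we have $\nabla w(y) = \nabla u(\Psi(y)) [\nabla \Psi(y)]^* = \nabla u(\Psi(y))[\nabla \Phi(\Psi(y))]^{-*}$, so that $\nabla u(\Psi(y)) = \nabla w(y) [\nabla \Phi(\Psi(y))]^*$; equivalently, the test function $\varphi \in C^\infty(\overline{D_{2R}^+})$ vanishing near $\partial D_{2R}^+ \setminus T_{2R}$ corresponds to $\phi(x) = \varphi(\Phi(x))$, with $\nabla \phi(x) = \nabla \varphi(\Phi(x)) [\nabla \Phi(x)]$, which is an admissible test function in \eqref{local-weak-form} since it vanishes near $\partial \mathcal{C}_{2R} \setminus \Gamma_{2R}$. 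One should also note that the map $u \mapsto w$ preserves the weighted Sobolev class: because $\omega(y) = \mu(\Psi(y))$, $\det(\nabla \Psi) = 1$, and the Jacobian bounds \eqref{grad-flaten} hold, a direct change of variables gives $\|w\|_{W^1_p(D_{2R}^+, \omega)} \le N \|u\|_{W^1_p(\mathcal{C}_{2R}, \mu)} < \infty$, so indeed $w \in W^1_p(D_{2R}^+, \omega)$.

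Next I would substitute. Starting from \eqref{local-weak-form} with test function $\phi(x) = \varphi(\Phi(x))$ and changing variables $x = \Psi(y)$ (so $dx = dy$ since $\det(\nabla \Psi) = 1$), the left-hand side becomes
\[
\int_{D_{2R}^+} \mu(\Psi(y)) \, \big\langle \A(\Psi(y), \nabla u(\Psi(y))), \nabla \varphi(\Phi(\Psi(y))) [\nabla \Phi(\Psi(y))] \big\rangle \, dy.
\]
Using $\Phi(\Psi(y)) = y$, $\nabla u(\Psi(y)) = \nabla w(y)[\nabla \Phi(\Psi(y))]^*$, the definitions in \eqref{w-hatA.def}, and the elementary identity $\langle M\xi^*, \eta^* M^*\rangle = \langle M^*M$-free rearrangement $\rangle$ — more precisely $\langle \A(\cdot, \nabla w [\nabla\Phi]^*), \nabla\varphi\,[\nabla\Phi]\rangle = \langle \A(\cdot, \nabla w[\nabla\Phi]^*)[\nabla\Phi]^*, \nabla\varphi\rangle = \langle \hat{\A}(y, \nabla w(y)), \nabla\varphi(y)\rangle$ — the integrand collapses to $\omega(y)\langle \hat{\A}(y, \nabla w(y)), \nabla\varphi(y)\rangle$. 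The right-hand side transforms identically with $\A$ replaced by the constant-in-$\xi$ field $\F$, producing $\int_{D_{2R}^+}\omega(y)\langle \F(\Psi(y))[\nabla\Phi(\Psi(y))]^*, \nabla\varphi(y)\rangle\,dy$, so that the correct forcing term is $\G(y) = \F(\Psi(y))[\nabla\Phi(\Psi(y))]^*$ as claimed. This yields exactly \eqref{wea-w}.

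The only genuine point requiring care — and what I would treat as the main (minor) obstacle — is the correspondence of admissible test functions across the diffeomorphism: one must confirm that $\varphi \mapsto \varphi \circ \Phi$ maps the space $C^\infty(\overline{D_{2R}^+})$ vanishing near $\partial D_{2R}^+ \setminus T_{2R}$ onto the analogous space on $\mathcal{C}_{2R}$ (vanishing near $\partial\mathcal{C}_{2R}\setminus\Gamma_{2R}$), which follows because $\Phi, \Psi$ are bi-Lipschitz homeomorphisms taking $T_{2R}$ to $\Gamma_{2R}$ and mapping the respective "lateral and top" boundaries to each other; since $\gamma$ is only Lipschitz, one cannot claim $\varphi\circ\Phi$ is $C^\infty$, but it is Lipschitz, which suffices for use as a test function in \eqref{local-weak-form} after a routine density/mollification argument (or by directly enlarging the test-function class to Lipschitz functions, as is standard). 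Everything else is a bookkeeping computation with $2\times 2$ block matrices and the change-of-variables formula, which I would not spell out in full.
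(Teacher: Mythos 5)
Your approach---performing the change of variables $x=\Psi(y)$ directly in the weak formulation \eqref{local-weak-form} and checking that the resulting identity is precisely \eqref{wea-w} with $\hat\A$, $\omega$, $\G$ as defined---is exactly what the paper does; the paper's proof is two sentences citing this same computation. You also correctly flag the one real technical point, namely that $\varphi\circ\Phi$ is only Lipschitz (since $\gamma$ is merely Lipschitz), so one must either mollify or enlarge the admissible test-function class; this is a legitimate subtlety the paper glosses over, and your treatment of it is fine.

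There is, however, a small bookkeeping error in your chain rule. With the Jacobian convention $(\nabla\Phi)_{ij}=\partial_{x_j}\Phi_i$ used in \eqref{na-phi}--\eqref{na-Psi}, and with gradients treated as row vectors (which is what makes the expressions $\xi[\nabla\Phi(\Psi(y))]$ and $\A(\cdot)[\nabla\Phi(\Psi(y))]^*$ in \eqref{w-hatA.def} parse), the chain rule for $w=u\circ\Psi$ reads
\[
\nabla w(y)=\nabla u(\Psi(y))\,\nabla\Psi(y),\qquad\text{hence}\qquad \nabla u(\Psi(y))=\nabla w(y)\,\nabla\Phi(\Psi(y)),
\]
with no transpose. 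You wrote $\nabla u(\Psi(y))=\nabla w(y)[\nabla\Phi(\Psi(y))]^*$, which then yields the intermediate expression $\A(\cdot,\nabla w[\nabla\Phi]^*)[\nabla\Phi]^*$; this does \emph{not} equal $\hat\A(y,\nabla w)$ as defined in \eqref{w-hatA.def}, where the inner factor of $\nabla\Phi$ is untransposed, and since $\nabla\Phi$ is a nontrivial lower-triangular matrix the transpose genuinely matters. Note you state the chain rule \emph{without} a transpose for the test function ($\nabla\phi(x)=\nabla\varphi(\Phi(x))\nabla\Phi(x)$), so this is an internal inconsistency rather than a deliberate convention. Dropping the transpose in the solution's chain rule corrects everything downstream; your rearrangement identity $\wei{a,bM}=\wei{aM^*,b}$ is then applied correctly, and the rest of the argument goes through.
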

\begin{proof} Since $u \in W^{1}_{p}(\mathcal{C}_{2R}, \mu)$ and by a change of variables, we see that $w \in W^1_p(D_{2R}^+, \omega)$. The lemma follows directly by writing the solutions in the weak forms \eqref{local-weak-form} and \eqref{wea-w}, and using  a change of variables.
\end{proof}
Next, let us denote
\[
[\A]_{\textup{BMO}_{R_0}(\mathcal{C}_R, \mu)} = \sup_{\rho \in (0, R_0)} \sup_{x\in \mathcal{C}_R} \Theta_{\rho,x}(\A, \mu),
\]
where $\Theta_{\rho,x}(\A, \mu)$ is defined in \eqref{Theta.def}. A similar definition can be made also for $[\hat{\A}]_{\textup{BMO}_{R_0}(D_R^+, \omega)}$. Our next result gives the estimate of the mean oscillation of $\hat{\A}$ with the weight $\mu$.
\begin{lemma} \label{A-h-BMO}
Assume that $\Omega$ is $(\delta, \rho_0)$-Lipschitz. If $\A$ satisfies \eqref{structure-A}, then so does $\hat{\A}$ on $D_{2R}^+$. Moreover, there is $N_0 = N_0(\kappa, n) >0$ such that
\[
[\hat{\A}]_{\textup{BMO}_{R_0}(D_R^+, \omega)} \leq N_0 \Big([\A]_{\textup{BMO}_{2R_0}(\mathcal{C}_R, \mu)} + \delta\Big),
\]
where $\hat{\A}$ is defined in \eqref{w-hatA.def}.
\end{lemma}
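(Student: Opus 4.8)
The plan is to verify the two assertions in turn. That $\hat{\A}$ inherits the structure conditions \eqref{structure-A} on $D_{2R}^+$, with $\kappa$ replaced by some $\bar\kappa=\bar\kappa(\kappa,n)\in(0,1)$, is a direct computation. Writing $M(y)=\nabla\Phi(\Psi(y))$, formula \eqref{na-phi} gives $\det M(y)=1$, $M(y)^{-1}=\nabla\Psi(y)$, and $\|M(y)\|,\|M(y)^{-1}\|\le\sqrt{n+1}$ by \eqref{grad-flaten}. Hence, using $\wei{aM(y)^*,\zeta}=\wei{a,\zeta M(y)}$,
\[
\wei{\hat{\A}(y,\xi)-\hat{\A}(y,\eta),\xi-\eta}=\wei{\A(\Psi(y),\xi M(y))-\A(\Psi(y),\eta M(y)),(\xi-\eta)M(y)}\ge\kappa|(\xi-\eta)M(y)|^2\ge\frac{\kappa}{n+1}|\xi-\eta|^2,
\]
while $\hat{\A}(y,0)=\A(\Psi(y),0)M(y)^*=0$ and, by the chain rule, $|D_\xi\hat{\A}(y,\xi)|\le\|M(y)\|^2|D_\xi\A(\Psi(y),\xi M(y))|\le(n+1)\kappa^{-1}$. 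In particular the averages of $\hat{\A}$ entering the $\textup{BMO}$ seminorm are well defined, and each $\mu$-weighted average $\xi\mapsto\A_{\Omega_\rho(x_0)}(\xi)$ inherits from \eqref{structure-A} both $\A_{\Omega_\rho(x_0)}(0)=0$ and the Lipschitz bound $|\A_{\Omega_\rho(x_0)}(\xi)-\A_{\Omega_\rho(x_0)}(\eta)|\le\kappa^{-1}|\xi-\eta|$.

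For the seminorm estimate, fix $y_0\in\overline{D_R^+}$ and $\rho\in(0,R_0)$ with $D_\rho^+(y_0)\subset D_{2R}^+$, and set $x_0=\Psi(y_0)\in\overline{\mathcal{C}_R}$, $M_0=M(y_0)$. Since $\Psi$ and $\Phi=\Psi^{-1}$ act as the identity in the $x'$-variables and shear the $x_n$-variable only by $O(\delta)$, for $\delta$ small one has the two-sided inclusion $\Omega_{\rho/2}(x_0)\subset\Psi(D_\rho^+(y_0))\subset\Omega_{2\rho}(x_0)$; together with $\det\nabla\Psi=1$ and the doubling property of $\mu$ (recall $\alpha>-1$), this yields $\mu(\Omega_{2\rho}(x_0))\le N\,\mu(\Psi(D_\rho^+(y_0)))$. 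Introduce the $y$-independent reference field $\mathcal{R}(\xi):=\A_{\Omega_{2\rho}(x_0)}(\xi M_0)\,M_0^*$, with $\A_{\Omega_{2\rho}(x_0)}$ the $\mu$-weighted average \eqref{A-weight-everage}. Since $\mathcal{R}$ is constant in $y$, it majorizes, up to a numerical constant obtained from the triangle inequality and averaging, the oscillation of $\hat{\A}$ against either its $\omega$-weighted average over $D_\rho^+(y_0)$ or the partial $x'$-average of $\hat{\A}$ over $B_\rho'(y_0')$ as in Definition \ref{PBMO-def}, so it is enough to bound $\sup_{\xi\neq0}|\xi|^{-1}|\hat{\A}(y,\xi)-\mathcal{R}(\xi)|$ pointwise and then average against $\omega$. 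Writing $\zeta=\xi M(y)$ and abbreviating $\bar{\A}=\A_{\Omega_{2\rho}(x_0)}$, split
\[
\hat{\A}(y,\xi)-\mathcal{R}(\xi)=\big[\A(\Psi(y),\zeta)-\bar{\A}(\zeta)\big]M(y)^*+\big[\bar{\A}(\zeta)-\bar{\A}(\xi M_0)\big]M(y)^*+\bar{\A}(\xi M_0)\big(M(y)^*-M_0^*\big).
\]

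Now estimate the three terms. The matrix $M(y)-M_0$ has only its last row nonzero, equal to $\partial_{x'}\gamma(y_0')-\partial_{x'}\gamma(y')$, so $\|M(y)-M_0\|\le2\|\nabla\gamma\|_{L_\infty}\le2\delta$; with the Lipschitz bound on $\bar{\A}$ and $\|M(y)\|,\|M_0\|\le\sqrt{n+1}$, the second and third terms together contribute at most $N\delta|\xi|$. The first term is $\le\|M(y)\|\,|\A(\Psi(y),\zeta)-\bar{\A}(\zeta)|\le(n+1)\,|\xi|\,\sup_{\zeta'\neq0}|\zeta'|^{-1}|\A(\Psi(y),\zeta')-\bar{\A}(\zeta')|$; dividing by $|\xi|$, averaging against $\omega$ over $D_\rho^+(y_0)$, and changing variables $x=\Psi(y)$ (so that $\omega(y)\,dy=\mu(x)\,dx$) turns its contribution into
\[
\frac{N}{\mu(\Psi(D_\rho^+(y_0)))}\int_{\Psi(D_\rho^+(y_0))}\mu(x)\sup_{\zeta'\neq0}\frac{|\A(x,\zeta')-\A_{\Omega_{2\rho}(x_0)}(\zeta')|}{|\zeta'|}\,dx\le N\,\Theta_{2\rho,x_0}(\A,\mu)\le N\,[\A]_{\textup{BMO}_{2R_0}(\mathcal{C}_R,\mu)},
\]
where the first inequality uses $\Psi(D_\rho^+(y_0))\subset\Omega_{2\rho}(x_0)$ and the doubling comparison above, and the last uses $2\rho<2R_0$. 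Collecting the bounds and taking the supremum over $y_0\in\overline{D_R^+}$ and $\rho\in(0,R_0)$ yields $[\hat{\A}]_{\textup{BMO}_{R_0}(D_R^+,\omega)}\le N_0\big([\A]_{\textup{BMO}_{2R_0}(\mathcal{C}_R,\mu)}+\delta\big)$.

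The only step needing real care is the measure-theoretic bookkeeping behind the change of variables in the first term: trapping $\Psi(D_\rho^+(y_0))$ between $\Omega_{\rho/2}(x_0)$ and $\Omega_{2\rho}(x_0)$, and invoking the doubling of $\mu$ to pass from $\mu(\Psi(D_\rho^+(y_0)))$ to $\mu(\Omega_{2\rho}(x_0))$ so that the average collapses exactly to $\Theta_{2\rho,x_0}(\A,\mu)$. Everything else is soft: reducing the average defining $[\hat{\A}]_{\textup{BMO}}$ to the fixed reference $\mathcal{R}$ costs only a constant, and the two flattening error terms are immediate since $\nabla\Phi\circ\Psi$ depends on $x'$ only through $\nabla\gamma$, which is $O(\delta)$ everywhere.
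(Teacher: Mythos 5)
Your argument is correct and proves the same estimate, but by a somewhat different decomposition from the paper's. The paper writes $\hat{\A}(y,\xi)=\mathbb{B}(y,\xi)+\mathbb{D}(y,\xi)$ with $\mathbb{D}(y,\xi)=\A(\Psi(y),\xi)$ a pure pullback of $\A$ and $\mathbb{B}$ the remainder, which it shows is $O(\delta)|\xi|$ \emph{pointwise} in $(y,\xi)$ by invoking the mean value theorem in $\xi$; then $[\mathbb{B}]_{\textup{BMO}}\le N\delta$ trivially and $[\mathbb{D}]_{\textup{BMO}}$ is controlled by a change of variables. You instead freeze the matrix $M_0=\nabla\Phi(\Psi(y_0))$ and compare $\hat\A$ against the $y$--independent reference $\mathcal{R}(\xi)=\A_{\Omega_{2\rho}(x_0)}(\xi M_0)M_0^*$, splitting the difference into a BMO term plus two $O(\delta)$ terms driven by $\|M(y)-M_0\|\le 2\delta$. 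The routes are close in spirit (both isolate a pullback-of-BMO term plus an $O(\delta)$ flattening error), but yours is $y_0$--local, avoids the mean value theorem (a slight bonus, since $\A$ is only Lipschitz, not $C^1$, in $\xi$), and requires instead the "replace the optimal average by an arbitrary reference" step and the ball-trapping / doubling bookkeeping that you correctly flag. One small point to double-check in your quantitative claims: for $y\in D_\rho^+(y_0)$ one has $|\Psi(y)-x_0|\le\sqrt{1+(1+\delta)^2}\,\rho$, which is $<2\rho$ only when $\delta$ is not too large (roughly $\delta<\sqrt{3}-1$); this is harmless since $\delta$ is taken small throughout, but it is worth noting that the inclusion $\Psi(D_\rho^+(y_0))\subset\Omega_{2\rho}(x_0)$ relies on this smallness rather than holding for all $\delta<1$.
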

\begin{proof} The first assertion of the lemma follows directly from a direct calculation, so we skip it. To prove the second assertion in the lemma, we observe that by the mean value theorem, there is $\eta \in \mathbb{R}^n$ such that
\[
\A(\Psi(y), \xi[\nabla \Phi(\Psi(y))]) = \A(\Psi(y), \xi) + \A_{\xi}(\Psi(y), \eta) [\nabla \Phi(\Psi(y)) -\I_n]\xi,
\]
where $\I_n$ is the $n\times n$ identity matrix.
Then, we can write
\[
\hat{\A}(y, \xi) = \mathbb{B}(y,\xi) +  \mathbb{D}(y, \xi),
\]
where
\[
\mathbb{B} = \A(\Psi(y), \xi [\nabla \Phi(\Psi(y))])[\nabla \Phi(\Psi(y)) -\mathbb{I}_n]^* +  \A_{\xi}(\Psi(y), \eta) [\nabla \Phi(\Psi(y)) -\I_n]\xi
\]
and $\mathbb{D}(y, \xi) = \A(\Psi(y), \xi)$. Then, it follows from the boundedness and the growth condition of $\A$ in \eqref{structure-A}, and the explicit formulas in \eqref{na-phi} and \eqref{na-Psi}  that
\[
[\mathbb{B}]_{\textup{BMO}_{R_0}(D_R^+,\omega)} \leq N(n,\kappa) \|\nabla \gamma\|_{L_\infty} \leq N(n, \kappa) \delta.
\]
By a change of variables and subtracting the weighted average, we also have
\[
[\mathbb{D}]_{\textup{BMO}_{R_0}(D_R^+,\omega)} \leq N(n) [\A]_{\textup{BMO}_{2R_0}(C_R,\mu)}.
\]
The proof of the lemma is then completed.
\end{proof}

Now we give the proof of Theorem \ref{local-Lp-thm}.

\begin{proof}[Proof of Theorem \ref{local-Lp-thm}] Let $\delta_0 = \delta_0(\kappa, \alpha, n, p) >0$ be the number defined in Theorem \ref{flat-Lp-thrm}. Choose $\delta_2\in (0, \delta_0)$ such that $2N_0\delta_2 < \delta_0$, where $N_0$ is a number defined in Lemma \ref{A-h-BMO}. Then, it follows from $\delta\leq \delta_2$, \eqref{ep1}, and Lemma \ref{A-h-BMO} that
\[
\sup_{\rho \in (0, R_0/2)}\sup_{x \in D_R^+} \Theta_{\rho, x}(\hat{\A}, \omega) \leq \delta_0.
\]
From this and Definition \ref{PBMO-def}, we apply Theorem \ref{flat-Lp-thrm} to the equation \eqref{w.eqn0406} with a scaling and obtain
\[
\begin{split}
& \left(\fint_{D_R^+} |\nabla w(y)|^p \,d\omega(y) \right)^{1/p} \\
& \leq N \left(\fint_{D_{2R}^+} |\nabla w(y)|^2 \,d\omega(y) \right)^{1/2} + N \left(\fint_{D_{2R}^+} | \mathbf{G}(y)|^p \,d\omega(y) \right)^{1/p}
\end{split}
\]
for $N = N(\kappa, \alpha, p,n)>0$. From this, the definition of $w$ and $\mathbf{G}$ in Lemma \ref{change-cor}, and the estimates in \eqref{grad-flaten},  \eqref{Du-est-brx} follows by using the change of variables $y \mapsto x = \Psi(y)$. The proof of Theorem \ref{local-Lp-thm} is completed.
\end{proof}

\subsection{Global \texorpdfstring{$L_p$}{Lp}-estimates}  This subsection gives the proof of Theorem \ref{g-theorem}.

\begin{proof}[Proof of Theorem \ref{g-theorem}]  Let $\rho = \min\{\rho_0, r_0\}/8$ and $\delta = \min\{\delta_1, \delta_2 \}$,  where $\delta_1 = \delta_1(\kappa, p, n ,\alpha,\rho)$ is defined in Theorem \ref{inter.est-thm} and $\delta_2 = \delta_2(\kappa, p, n, \alpha)$ is defined  Theorem \ref{local-Lp-thm}. We prove Theorem \ref{g-theorem} with this choice of $\delta$.

\smallskip
Note that as $p \geq 2$,  $\F \in L_2(\Omega, \mu)$ if $\F \in L_p(\Omega, \mu)$. Then, by the Minty-Browder theorem, it follows that there exists a weak solution $u \in W^1_2(\Omega, \mu)$ to the equation \eqref{main-eqn}, and this weak solution is unique up to a constant. More precisely, let $\mathcal{X}$ be the space consisting of all functions $v \in L_{1,\text{loc}}(\Omega)$ such its weak derivative $\nabla v$ exists,
\[
\left(\int_\Omega |\nabla v(x)|^2\,d\mu(x)\right)^{1/2}<\infty, \quad \text{and} \quad \int_{B} v(x) dx =0
\]
for some fixed ball $B \subset \Omega$. The space $\mathcal{X}$ is endowed with the norm
\[
\|v\|_{\mathcal{X}}=\left(\int_\Omega |\nabla v(x)|^2\,d\mu(x)\right)^{1/2}, \quad v \in \mathcal{X}.
\]
It is easy to show that $\mathcal{X}$ is a Banach space and the norm is uniformly convex. Therefore, $\mathcal{X}$ is reflexive and the Minty-Browder theorem is applicable which gives the existence of a weak solution $u \in W^1_2(\Omega, \mu)$ to the equation \eqref{main-eqn}.

\smallskip
It remains to prove \eqref{main-est}. Observe that with our choice of $\delta$ and under the assumptions of Theorem \ref{g-theorem},  the conditions in Theorems \ref{inter.est-thm} and  \ref{local-Lp-thm} are satisfied. Due to this, we apply Theorem \ref{inter.est-thm} to get
\[
\|\nabla u\|_{L_p(\Omega^{2\rho}, \mu)} \leq N \|\nabla u\|_{L_2(\Omega, \mu)} + N\|\F\|_{L_p(\Omega, \mu)}.
\]
Similarly, applying Theorem \ref{local-Lp-thm}, we obtain
\[
\|\nabla u\|_{L_p(\mathcal{C}_{3\rho}(x_0), \mu)} \leq N \|\nabla u\|_{L_2(\Omega, \mu)} + N\|\F\|_{L_p(\Omega, \mu)}
\]
for any $x_0 \in \partial \Omega$. Then it follows from the compactness of $\overline{\Omega}$ that
\[
\|\nabla u\|_{L_p(\Omega, \mu)} \leq N\|\nabla u\|_{L_2(\Omega, \mu)} + N\|\F\|_{L_p(\Omega, \mu)}
\]
for $N>0$ depending on $p,\kappa, n, r_0, R_0, \rho_0, \alpha, \Omega$,  and the modulus of continuity of $\mu$ on $\overline{\Omega^\rho}$. On the other hand, by  the energy estimate and H\"{o}lder's inequality, we have
\[
\|\nabla u\|_{L_2(\Omega, \mu)} \leq N(\kappa)\|\F\|_{L_2(\Omega, \mu)} \leq N(\kappa, p, \Omega)\|\F\|_{L_p(\Omega, \mu)}.
\]
Therefore,
\[
\|\nabla u\|_{L_p(\Omega, \mu)} \leq N  \|\F\|_{L_p(\Omega, \mu)}
\]
and the first assertion in \eqref{main-est} is proved.

\smallskip
It remains to prove the second assertion in \eqref{main-est}. To this end, we apply the weighted Sobolev embedding theorem \cite[Remark 3.2 (ii)]{DP20}, see also \cite[Theorem 6]{hajlasz}.  In fact, by flattening the  boundary of the domain $\Omega$, and using Lemma \ref{mu-lemma} and  a partition of unity, we can apply the Sobolev embedding \cite[Remark 3.2 (ii)]{DP20} to obtain
\begin{equation} \label{Sob-embed}
\|u\|_{L_{p_1}(\Omega, \mu)}  \leq N \|u\|_{W^1_2(\Omega, \mu)},
\end{equation}
where $p_1 \in (2, \infty]$ satisfying
\[
\frac{n+\alpha_+}{2} \leq 1 + \frac{n + \alpha_+}{p_1}
\]
and $N = N(\Omega, \alpha, r_0)>0$. Then, by the energy estimate and H\"{o}lder's inequality, we infer from \eqref{Sob-embed} that
\[
\begin{split}
\|u\|_{L_{p_1}(\Omega, \mu)} & \leq N \|u\|_{L_2(\Omega, \mu)}  + N\|\F\|_{L_2(\Omega, \mu)} \\
& \leq N \|u\|_{L_2(\Omega, \mu)}  +N \|\F\|_{L_p(\Omega, \mu)}.
\end{split}
\]
If $p_1 \geq p$, the second estimate in \eqref{main-est} follows. Otherwise, we repeat the process by applying the Sobolev embedding \cite[Remark 3.2 (ii)]{DP20} again to obtain
\[
\|u\|_{L_{p_2}(\Omega, \mu)} \leq N \|u\|_{W^{1}_{p_1}(\Omega, \mu)} \leq N \|u\|_{L_2(\Omega, \mu)}  + N\|\F\|_{L_p(\Omega, \mu)},
\]
with $p_2 \in (p_1, \infty]$ satisfying
\[
\frac{n+\alpha_+}{p_1} \leq 1 + \frac{n + \alpha_+}{p_2}.
\]
By doing this, we obtain an increasing sequence of numbers $\{p_k\}_k$ defined as above and obtain the second estimate in  \eqref{main-est} when $p_k \geq p$ for some $k \geq 1$. The proof of the theorem is completed.
\end{proof}
\begin{remark} By using the Sobolev embedding, H\"older's inequality, and a standard iteration argument, the second estimate in \eqref{main-est} can be replaced by
\[
\|u\|_{L_{p^*}(\Omega, \mu)} \leq N \Big[\|u\|_{L_{1}(\Omega, \mu)} + \|\F\|_{L_p(\Omega, \mu)} \Big]
\]
for $p^* \in (p, \infty)$ satisfying
\[
\frac{n+\alpha_+}{p} \leq 1 + \frac{n + \alpha_+}{p^*}
\]
and if the strict inequality holds $p^*=+\infty$ is allowed.
We also point out that the weighted Poincar\'e inequality of the type
\[
\|u - \bar{u}_{\Omega}\|_{L_{p}(\Omega, \mu)} \leq N \|\nabla u\|_{L_p(\Omega,\mu)}, \quad \text{where} \quad \bar{u}_{\Omega} = \frac{1}{\mu(\Omega)}\int_{\Omega} u(x)\, d\mu(x)
\]
obtained in \cite{Fabes} cannot be directly applied as $\mu \not\in A_p$ when $\alpha \geq p-1$.
\end{remark}

\end{document}